\documentclass[a4paper,12pt]{article}
\usepackage{amsmath, amsthm, amsfonts, amssymb}

\textwidth=16.5cm
\textheight=22,5cm
\hoffset=-0.8cm
\voffset=-1.3cm

\newcommand{\supp}{\mathop{\rm supp}}
\renewcommand{\lg}{\langle}
\newcommand{\rg}{\rangle}
\makeatletter
\@addtoreset{equation}{section}
\makeatother
\makeatletter
\@addtoreset{corl}{section}
\makeatother
\makeatletter
\@addtoreset{expl}{section}
\makeatother
\makeatletter
\@addtoreset{thm}{section}
\makeatother
\makeatletter
\@addtoreset{lem}{section}
\makeatother
\makeatletter
\@addtoreset{defn}{section}
\makeatother

\theoremstyle{plain}
\newtheorem{thm}{Theorem}[section]
\newtheorem{lem}{Lemma}[section]

\newtheorem{corl}{Corollary}[section]
\theoremstyle{definition}
\newtheorem{defn}{Definition}[section]
\newtheorem{expl}{Example}[section]
\theoremstyle{remark}


\newcommand{\wt}{\widetilde}

\newcommand{\ov}{\overline}

\newcommand{\ve}{\varepsilon}
\newcommand{\vf}{\varphi}

\newcommand{\mbR}{{\mathbb R}}

\newcommand{\mbQ}{{\mathbb Q}}

\newcommand{\mfX}{{\mathfrak X}}

\renewcommand{\ln}{\log}

\newcommand{\cF}{{\cal F}}

\newcommand{\cY}{{\cal Y}}
\newcommand{\cM}{{\cal M}}
\newcommand{\cR}{{\cal R}}

\begin{document}
 \large

\renewcommand{\proofname}{Proof}
\begin{center}
{\Large\bf
Large deviations for  flows of interacting Brownian motions
}\\[1cm]
A.A.Dorogovtsev, O.V.Ostapenko\\[0,4cm]
{\bf Abstract}
\end{center}
\vskip15pt

 We establish  the large deviation   principle (LDP) for
stochastic flows of interacting Brownian motions. In particular,
we consider smoothly correlated flows, coalescing flows and Brownian  motion stopped at a hitting moment.
 \vskip0,5 cm

{\bf Key words}: Large deviations, stochastic flow, Arratia's flow,
stochastic differential equation with interaction.

{\bf AMS subject classification}: Primary 60F10; secondary 37L55.

\section{Introduction}
\label{section1}

The article is devoted to the large deviations principle  for
stochastic flows of Brownian motions on $\mbR.$  We use the
following definition.

\begin{defn}
\label{defn1.1} A random field $\{x(u,t); u\in\mbR, t\geq0\}$ is a
stochastic flow of Brownian motions if the following conditions
hold:

1) for every $u\in\mbR \ x(u,\cdot)$ is a Wiener martingale with
respect to a common filtration and $x(u,0)=u,$

2) for every $u_1\leq u_2$  and $t\geq0$
$$
x(u_1,t)\leq x(u_2,t).
$$
\end{defn}

Such stochastic flows can be constructed via different ways [1 --
3]. Here we study flows produced by solutions of stochastic differential equations, and their limits. Let $W$ be a
Wiener sheet on $\mbR\times[0; +\infty)$   (i.e. $W$ is the Gaussian
random measure with independent values on disjoint sets and the
Lebesgue measure as a control measure). Consider the equation
\begin{equation}
\label{eq:1.1}
\begin{cases}
dx(u,t)=\int_\mbR\vf(x(u, t)-p)W(dp, dt),\\
x(u,0)=u, \ u\in\mbR
\end{cases}
\end{equation}
with a smooth function $\vf\in S$  ($S$ denotes the Schwartz space). Suppose,
that
$$
\int_{\mbR}\vf^2(p)dp=1.
$$
Using an orthonormal basis $\{e_n; n\geq1\}$ in $L_2(\mbR),$ (1.1)
can be rewritten as a SDE with respect to the countable family of
independent Wiener processes
$$
\beta_n(t)=\int^t_0\int_{\mbR}e_n(p)W(dp, ds), \ n\geq1.
$$
In such terms (1.1) takes the form
\begin{equation}
\label{eq1.1'}
\begin{cases}
dx(u,t)=\sum^\infty_{n=1}a_n(x(u,t))d\beta_n(t),\\
x(u,0)=u, \ u\in\mbR.
\end{cases}
\end{equation}

Under our conditions on $\vf$ Equation (1.1)  has a unique
solution, which is a flow of diffeomorphisms  [3, 5]. Consider the
properties of $x.$ Note that for arbitrary $u_1\leq u_2$  and
$t\geq0$
$$
x(u_1, t)\leq x(u_2,t)
$$
(diffeomorphic property). For every $u$ \ $x(u, \cdot)$ is a
continuous martingale with the characteristics
$$
\lg x(u, \cdot)\rg_t=\int^t_0\int_{\mbR}\vf^2(x(u,s)-p)dpds=t, \ t\geq0.
$$
Consequently [4],  $x(u, \cdot)$ is a Brownian motion, starting from
$u.$  Note that for different points $u_1, u_2$  the processes
$x(u_1, \cdot)$ and $x(u_2, \cdot)$ are correlated:
$$
\lg x(u_1, \cdot), x(u_2, \cdot)\rg_t=
$$
$$
=\int^t_0\int_{\mbR}\vf(x(u_1, s)-p)\vf(x(u_2, s)-p)dpds=
$$
$$
=\int^t_0\Phi(x(u_1, s)-x(u_2, s))ds,
$$
where
$$
\Phi(r)=\int_{\mbR}\vf(r-p)\vf(p)dp.
$$
The function $\Phi$  can be treated as a momental correlation
between different one-point motions in the flow \eqref{eq:1.1}
[3]. The singular case can be obtained from \eqref{eq:1.1}  when
$\vf^2$ tends to $\delta_0.$  More precisely, the following result
was proved in [5].

\begin{thm}
\label{thm:1.1}  Let $\vf_\ve\in S, \ \supp\vf_\ve\subset[-\ve;
\ve],$
$$
\int_{\mbR}\vf^2_\ve(p)dp=1,
$$
$\vf^2_\ve\to\delta_0, \ \ve\to0+.$  Then the $n$-pont motions of
the flow \eqref{eq:1.1} with $\vf_\ve$ instead of $\vf$  converge in
distribution to the $n$-point motions of Arratia's flow.
\end{thm}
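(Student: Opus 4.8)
The plan is to establish convergence in distribution of the $\mbR^n$-valued diffusion $X_\ve(t)=(x_\ve(u_1,t),\dots,x_\ve(u_n,t))$, solving \eqref{eq:1.1} with $\vf_\ve$, to the $n$-point motion of Arratia's flow, which is the process that runs as independent Brownian motions until two coordinates meet and thereafter keeps the meeting coordinates glued together. I would proceed in three stages: first identify the limiting martingale problem, then prove tightness of $\{X_\ve\}$ in $C([0,\infty);\mbR^n)$, and finally show every weak limit point solves that martingale problem and that the martingale problem is well posed. For the first stage, recall from the computation in the excerpt that each $x_\ve(u_i,\cdot)$ is a Brownian motion and the joint characteristics are $\lg x_\ve(u_i,\cdot),x_\ve(u_j,\cdot)\rg_t=\int_0^t\Phi_\ve(x_\ve(u_i,s)-x_\ve(u_j,s))\,ds$ with $\Phi_\ve(r)=\int_\mbR\vf_\ve(r-p)\vf_\ve(p)\,dp$. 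Since $\supp\vf_\ve\subset[-\ve;\ve]$ we have $\supp\Phi_\ve\subset[-2\ve;2\ve]$ and $\Phi_\ve(0)=\int\vf_\ve^2=1$, so $\Phi_\ve\to\delta_0$ weakly but also $\Phi_\ve(r)\to\1\{r=0\}$ pointwise in a strong sense: for $r\neq 0$, $\Phi_\ve(r)=0$ once $2\ve<|r|$. The candidate limit is therefore the diffusion with generator $\tfrac12\sum_{i,j}\1\{x_i=x_j\}\,\partial_i\partial_j$ acting on functions that are, say, symmetric and smooth, i.e.\ Arratia's $n$-point motion; this is exactly the object characterized by Arratia and by the later work cited as [1--3].

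For tightness I would use the Aldous/Kolmogorov criterion: each coordinate is a genuine Brownian motion (hence tight individually, with the standard fourth-moment bound $E|x_\ve(u_i,t)-x_\ve(u_i,s)|^4=3|t-s|^2$ uniformly in $\ve$), and joint tightness in $\mbR^n$ follows coordinatewise. This stage is essentially free. The real content is the third stage. Order the starting points $u_1<\dots<u_n$; by the diffeomorphic/monotone property in Definition~\ref{defn1.1}, the coordinates stay ordered, $x_\ve(u_1,t)\leq\dots\leq x_\ve(u_n,t)$, and this order is preserved in the limit, so it suffices to understand consecutive gaps $\rho^{(i)}_\ve(t)=x_\ve(u_{i+1},t)-x_\ve(u_i,t)\geq0$. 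Each gap is a one-dimensional semimartingale with $d\langle\rho^{(i)}_\ve\rangle_t=2\bigl(1-\Phi_\ve(\rho^{(i)}_\ve(t))\bigr)dt$, which equals $2\,dt$ whenever $\rho^{(i)}_\ve>2\ve$ and degenerates near $0$. I would show (a) along a weak limit $X$, once a gap hits $0$ it stays at $0$ — i.e.\ coalescence is permanent — and (b) before coalescence the relevant coordinates behave as independent Brownian motions. For (b), note that if $x_\ve(u_i,s)$ and $x_\ve(u_j,s)$ are separated by more than $2\ve$ on a time interval then their joint bracket vanishes there, so they are genuinely orthogonal martingales on that set; passing to the limit, distinct limit coordinates are driven by independent Brownian motions until they meet. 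For (a), the hard point, I would control the occupation time of the gap process near zero: using that $\rho_\ve^{(i)}$ is a nonnegative continuous semimartingale that is a reflected-type process and that $\Phi_\ve$ is continuous with $\Phi_\ve(0)=1$, one shows $\rho_\ve^{(i)}$ spends vanishing time (as $\ve\to0$) in the ``slow'' zone $(0,2\ve]$, and a Tanaka/local-time argument forces the limiting gap, once at $0$, to be absorbed; alternatively one verifies that the limit gap has the Dirichlet form of sticky-free reflected Brownian motion with reflection strength tending to absorption.

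The main obstacle is exactly step (a): ruling out that in the limit two coalesced particles separate again, i.e.\ showing that the ``sticky'' boundary behaviour at $\rho=0$ induced by the vanishing diffusion coefficient becomes genuine absorption and not, say, instantaneous reflection or a sticky point with positive sojourn. The natural way around it is a comparison/coupling argument: bound $\rho^{(i)}_\ve$ above and below by explicit diffusions (e.g.\ $|B_t|$-type upper bound using the fact that the speed measure only slows the process down near $0$, and a squeeze showing it cannot escape a shrinking neighbourhood of $0$ except on a set of vanishing probability), and quantify that the time to exit $[0,2\ve]$ after entering it tends to $0$ while the process does not move more than $O(\ve)$ meanwhile — so in the limit the exit happens at the origin and the process is trapped. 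Finally, once every limit point is identified as the Arratia $n$-point motion, uniqueness of that martingale problem (which I would cite from [1--3]) upgrades subsequential convergence to full convergence in distribution, completing the proof.
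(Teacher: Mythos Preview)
The paper does not contain a proof of Theorem~\ref{thm:1.1}: it is quoted in the introduction as a result ``proved in~[5]'' and is used only as motivation for the LDP questions treated later. So there is no in-paper argument to compare your proposal against.

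That said, your outline is the standard and reasonable route (martingale problem identification, tightness, characterization of limit points, well-posedness). One remark on where you place the difficulty: you flag permanence of coalescence, step~(a), as the ``hard point'' and sketch an occupation-time/Tanaka argument. In fact this step can be obtained almost for free once you know the limit is a martingale: each gap $\rho^{(i)}_\ve$ is a \emph{nonnegative continuous martingale} (difference of two one-dimensional martingales, nonnegative by the order-preserving property), and this structure passes to any weak limit; a nonnegative continuous martingale that hits $0$ is absorbed there by optional stopping. The genuine work is then in step~(b) and in the well-posedness citation: you must show that in the limit the cross-variation $\langle X_i,X_j\rangle$ vanishes up to the meeting time (not merely while the pre-limit coordinates are $>2\ve$ apart, which is where your argument currently lives), and that the resulting ``independent Brownian motions that coalesce on meeting'' description determines the law uniquely. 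Your sketch of~(b) stops short of this, since the set $\{\rho^{(i)}_\ve>2\ve\}$ is $\ve$-dependent and you still need to control what happens on its complement before the limiting meeting time. That is the estimate which actually requires the occupation-time bound you allude to.
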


Recall, that Arratia's flow consists of Brownian motions, which are
independent up to their meeting and move together after that
[2]. This flow can be treated as a Brownian flow with
$\delta_0$-correlation.

A recent result concerning the LDP for stochastic flows of diffeomorphisms is due to A. Budhiraja, D. Dupuis and V. Maroulas [6]. These authors considered stochastic dynamical systems driven by an infinite-dimensional Brownian motion
$$
\begin{aligned}
&d\phi^\ve_{s,t}(x)=F^\ve(\phi^\ve_{s,t}(x), dt),\\
&
\phi^\ve_{s,s}(x)=x, \ 0\leq s\leq t\leq T, \ \ x\in\mbR^d,
\end{aligned}
$$
where $F^\ve(x, t)$   is a $C^{k+1}$-Brownian motion in the sence of
Kunita [3]. They got the LDP for $(\phi^\ve, F^\ve)_{\ve>0}$ in the space
$C([0,T]; C^m(\mbR^d))$ and $C([0,T]; G^m)$ ($G^m$ is the group of
$C^m$-diffeomorphisms on $\mbR^d$). The proof of the LDP is based on variational representations for functionals of infinite-dimensional Brownian motion [8].

Note that the family $\{x^\ve\}_{\ve>0}$ in (1.1) can be represented as the flow described in [6]. Indeed, we can
write

$$
dx^\ve(u,t)=\sqrt{\ve}\sum^\infty_{k=1}a_k(x^\ve(u,t))d\beta_k(t),
$$
$$
x^\ve(u,0)=u, \ t\in[0;1], \ u\in\mbR,
$$
where
$$
a_k(v)=\int_{\mbR}\vf(v-p)e_k(p)dp,
$$
$$
\beta_k(t)=\int^t_0\int_{\mbR}e_k(p)W(dp, ds),
$$
$\{e_k, k\geq1\}$   is an orthonormal basis in $L_2(\mbR).$   However, in our article we present the LDP not only for smoothly correlated, but also for coalescing flows. In particular, we prove the LDP for Arratia's flow. The main dif\-fe\-ren\-ce between
 smooth and singular correlation is the presence of $W$ in
\eqref{eq:1.1}. Really, traditionally one can get the LDP for
\eqref{eq:1.1} from $W$ (roughly speaking). But
Arratia's flow does not contain a white noise. Hence we will need an
additional construction. The article is organized as follows. In the
second part we prove the LDP for smooth case \eqref{eq:1.1} using a method different from that of [6]. The
third part is devoted to the structure of Arratia's flow, which is a
limit case of (1.1). Here we present some facts about the total time
of free motion for particles in this flow. The fourth part is devoted
to the LDP for Brownian motion stopped at a hitting moment. Here
the time scaling is used instead of the space scaling. The fifth part
contains the LDP for the $n$-point motions of Arratia's flow. Finally
the last part of the article deals with the LDP for Arratia's flow in
the L\'evy--Prokhorov distance.

\section{LDP for smoothly correlated flows}
\label{section2}

For $\ve>0$ let  $x^\ve$  be the stochastic flow  described by the
dif\-fe\-rential equation
\begin{equation}
\label{eq:2.1}
\begin{array}{l}
dx^\ve(u, t)=\sqrt{\ve}\int_{\mbR}\vf(x^\ve(u,t)-p)W(dp, dt),\\
x^\ve(u,0)=u, \ u\in\mbR, \ t\in[0, 1].
\end{array}
\end{equation}

For every $\ve>0$ \ $x^\ve$  is a flow of homeomorphisms in $\mbR$
[3]. Moreover, one can get the following relations describing the
growth of $x^\ve$   with respect to spatial variable
\begin{equation}
\label{eq:2.2} \forall \  \delta>0: \ \  \lim_{|u|\to+\infty}
\frac{|x^\ve(u, t)|}{1+|u|^{\delta+1}}= \lim_{|u|\to+\infty}
\frac{|u|}{1+|x^\ve(u, t)|^{\delta+1}}=0 \ \mbox{a.s.}
\end{equation}
It follows from this property, that $x^\ve$  can be considered as a random
element in the space $\mfX=C([0; 1]; L_2(\mbR, \mu)).$   Here $\mu$  is the
standard normal distribution on $\mbR.$  We suppose, that $\mfX$  is
equipped with the norm
\begin{equation}
\label{eq:2.3}
\mfX\ni x\mapsto \|x\|=\sup_{t\in[0; 1]}
\bigg(
\int_{\mbR} x(u, t)^2\mu(du)
\bigg)^{\frac{1}{2}}.
\end{equation}
 We are
going to establish the  LDP for $x^\ve$  in the space $\mfX.$  The main
result is based on an extension of the contraction principle to
maps that are not continuous, but can be approximated well by
continuous maps [9]. Define an approximation for $x^\ve$  as
follows. Let $x^\ve_m, m\geq1$   be the stochastic flow described by
the equation
\begin{equation}
\label{eq:2.4}
\begin {cases}
dx^\ve_m(u, t)=\sqrt{\ve}\int_{\mbR}\vf\bigg(
x^\ve_m\bigg(u, \frac{[tm]}{m}\bigg)-p\bigg)W(dp, dt)\\
x^\ve_m(u,0)=u, \ u\in\mbR, \ t\in[0;1],
\end{cases}
\end{equation}
in which the coefficients of \eqref{eq:2.1}  are frozen over the time
intervals
$\bigg[\frac{k}{m}, \frac{k+1}{m}\bigg), \ k=0, \ldots, m-1.$
  Consider first the case $m=1.$
In this case \eqref{eq:2.4}  has the form
\begin{equation}
\label{eq:2.5}
\begin {cases}
dy^\ve(u, t)=\sqrt{\ve}\int_{\mbR}\vf(u-p)W(dp, dt),\\
y^\ve(u,0)=u, \ u\in\mbR, \ t\in[0;1].
\end{cases}
\end{equation}
Note that $y^\ve$ is a Gaussian random element in $\mfX$ obtained
from $W.$   So the LDP  for the family $y^\ve$ has a known form [10,
11]. Define  $H$  as the set of functions of the type
$$
h(u, t)=u+\int^t_0\int_{\mbR}\vf(u-p)a(p, s)dp ds,
$$
$$
 a\in
L_2(\mbR\times[0; 1]), \ u\in\mbR, \ t\in[0;1].
$$
Also, let $\cF$  be the Fourier transform on $\mbR.$
\begin{thm}
\label{thm:2.1}  Let $\vf\in S$  be such, that $\cF(\vf)\ne0$ a.s.
Then the family $\{y^\ve\}$   satisfies the LDP in $\mfX$  with rate
function
\begin{equation}
\label{eq:2.6} I_y(h)=\begin{cases}
\frac{1}{4\pi}\int^1_0\int_{\mbR} \bigg( \frac{\cF(\dot{h}(
\cdot,t))(\lambda)}{\cF(\vf)(\lambda)}
\bigg)^2d\lambda dt, \ h\in H,\\
\infty, \ h\notin H,
\end{cases}
\end{equation}
i.e.:

1) for each closed set $F\subset \mfX$
$$
\mathop{\varlimsup}\limits_{\ve\to0}\ve\log
P\{y^\ve\in F\}\leq-\inf_{f\in F}I(f);
$$

2) for each open set $G\subset\mfX$
$$
\mathop{\varliminf}\limits_{\ve\to0}\ve\log P\{y^\ve\in
G\}\geq-\inf_{f\in G}I(f).
$$
\end{thm}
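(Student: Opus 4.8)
The plan is to recognize $y^\ve$ as $\sqrt{\ve}$ times a fixed Gaussian element of $\mfX$ (plus a deterministic shift by the identity $u$), and to apply the general LDP for Gaussian measures on a separable Banach space, which gives a rate function that is the squared norm of the inverse of the associated covariance operator, i.e. the Cameron--Martin norm. Concretely, write $y^\ve(u,t) = u + \sqrt{\ve}\, G(u,t)$, where
$$
G(u,t) = \int_0^t\int_\mbR \vf(u-p)\,W(dp,ds).
$$
Since the map $(p,s)\mapsto \1_{[0,t]}(s)\vf(u-p)$ depends continuously on $(u,t)$ in $L_2(\mbR\times[0;1])$, and the growth bound \eqref{eq:2.2} (applied with $\vf$ frozen at time $0$, which is exactly the situation of \eqref{eq:2.5}) shows $G$ takes values in $\mfX$, the element $G$ is a centered Gaussian random variable in the separable Banach space $\mfX$. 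By the general large deviation principle for $\{\sqrt{\ve}\,G\}$ (Schilder-type theorem for Gaussian measures; see [10, 11]), the family satisfies the LDP with good rate function equal to $\tfrac12\|\cdot\|_{\mathcal H}^2$ on the Cameron--Martin space $\mathcal H$ of $G$ and $+\infty$ off it; translating by the deterministic function $u$ shifts the rate function argument from $h$ to $h(u,t)-u$.

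The second step is to identify the Cameron--Martin space $\mathcal H$ and its norm explicitly, and to check this matches \eqref{eq:2.6} with the shift. The covariance structure of $G$ is: for fixed $t$, $G(\cdot,t)$ has covariance kernel $\int_\mbR \vf(u-p)\vf(v-p)\,dp = \Phi(u-v)$ scaled by $t$, and increments over disjoint time intervals are independent. A Cameron--Martin element is obtained by replacing $W(dp,ds)$ by $a(p,s)\,dp\,ds$ with $a\in L_2(\mbR\times[0;1])$, yielding exactly
$$
g(u,t) = \int_0^t\int_\mbR \vf(u-p)\,a(p,s)\,dp\,ds,
$$
so that $h = u + g$ ranges over $H$ as in the statement. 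The Cameron--Martin norm is the infimum of $\tfrac12\|a\|_{L_2}^2$ over all $a$ producing a given $g$. To compute this infimum I would differentiate in $t$: $\dot g(\cdot,t) = \vf * a(\cdot,t)$ (convolution in space), hence on the Fourier side $\cF(\dot g(\cdot,t))(\lambda) = \sqrt{2\pi}\,\cF(\vf)(\lambda)\,\cF(a(\cdot,t))(\lambda)$ (with the paper's normalization constant, to be fixed). Since $\cF(\vf)\neq 0$ a.e., $a(\cdot,t)$ is uniquely determined by $\dot g(\cdot,t)$, so there is no infimum to take — the representation is unique — and Plancherel gives
$$
\tfrac12\|a\|_{L_2(\mbR\times[0;1])}^2 = \tfrac12\int_0^1 \|a(\cdot,t)\|_{L_2}^2\,dt = \frac{1}{4\pi}\int_0^1\int_\mbR \left(\frac{\cF(\dot g(\cdot,t))(\lambda)}{\cF(\vf)(\lambda)}\right)^2 d\lambda\,dt,
$$
which is precisely $I_y(h)$ with $\dot h = \dot g$. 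Outside $H$ the element $h-u$ is not in $\mathcal H$ and the rate function is $+\infty$.

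The main technical obstacles are two. First, one must verify carefully that $\{\sqrt\ve\,G\}$ really does obey the LDP in the Banach space $\mfX$ with the sup-norm \eqref{eq:2.3}: this requires that $G$ be a genuine Borel Gaussian element of the \emph{separable} space $\mfX = C([0;1];L_2(\mbR,\mu))$, which follows from continuity in $t$ of $G(\cdot,t)$ as an $L_2(\mbR,\mu)$-valued process together with the integrability ensured by \eqref{eq:2.2}; the abstract Gaussian LDP [10, 11] then applies on any separable Banach space. Second, one must track the Fourier-transform normalization constant so that the factor $\tfrac{1}{4\pi}$ in \eqref{eq:2.6} comes out correctly, and confirm that the condition $\cF(\vf)\neq 0$ a.e.\ is exactly what makes the quotient $\cF(\dot h)/\cF(\vf)$ well defined and the Cameron--Martin representation injective (so that the naive formula is the true rate function, with no hidden infimum). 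The handling of the deterministic identity shift $x\mapsto u$ is routine: translation by a fixed vector in a Banach space LDP merely translates the argument of the rate function.
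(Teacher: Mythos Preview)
Your proposal is correct and follows essentially the same route as the paper: both invoke the abstract Gaussian LDP [10, 11] for $y^\ve = u + \sqrt{\ve}\,G$ in $\mfX$, identify the Cameron--Martin space as the image of $L_2(\mbR\times[0;1])$ under the convolution-in-space, integral-in-time map (the paper calls this map $i$), use $\cF(\vf)\ne 0$ a.e.\ for injectivity, and then compute the rate function by Fourier-transforming $\dot h(\cdot,t)=\vf*a(\cdot,t)$ and applying Plancherel. The only cosmetic difference is that the paper packages the covariance as $ii^*$ and writes $I_y(h)=\tfrac12\|i^{-1}(h)\|^2$ directly, whereas you phrase it via the Cameron--Martin norm with an infimum that immediately collapses by injectivity; the substance is identical.
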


\begin{proof}
Note that
$$
y(u,t)=u+\int^t_0\int_{\mbR}\vf(u-p)W(dp, ds), \ t\in[0, 1], \
u\in\mbR,
$$
is a Gaussian random element in $\mfX.$   Define the map
$i: L_2(\mbR\times[0; 1])\to\mfX$   as
$$
i(a)(u, t)=u+\int^t_0\int_{\mbR}\vf(u-p)a(p, s)dp ds, \ a\in
L_2(\mbR\times[0; 1]).
$$
$i$ is a linear continuous operator.
Note that due to the condition on the Fourier transform of $\vf$
the operator $i$ is injection.

Since $W$   can be considered as a generalized Gaussian random
element in $L_2(\mbR\times[0; 1])$  with zero mean and identity
covariation, then by the standard arguments one can verify, that
covariation of $y$  is $ii^*.$  Therefore
[10,11], $\{y^\ve\}$   satisfies the LDP in $\mfX$  with rate
function
$$
I_y(h)=\begin{cases}
\frac{1}{2}\|i^{-1}(h)\|^2_{L_2(\mbR\times[0;1])}, \ h\in H,\\
\infty, \ h\notin H.
\end{cases}
$$
Rewrite the rate function in terms of the Fourier transform. Let $h=i(a),$
then
$$
\dot{h}(u,t)=\int_{\mbR}\vf(u-p)a(p,t)dp.
$$
For fixed $t\in[0, 1]$ apply the Fourier transform:
$$
\cF(\dot{h}(\cdot,t))=\cF(\vf)\cF(a(\cdot, t)).
$$
Hence,
$$
\|i^{-1}(h)\|^2_{L_2(\mbR\times[0;1])}=
\frac{1}{2\pi}\int^1_0\int_{\mbR} \bigg( \frac{\cF(\dot{h}(
\cdot,t))(\lambda)}{\cF(\vf)(\lambda)} \bigg)^2d\lambda dt.
$$
The theorem is proved.
\end{proof}

The contraction principle
[9] yields the LDP for $\{y^\ve(\cdot,t)\}$ at a fixed time
$t\in[0, 1].$
\begin{corl}
\label{corl:2.1} For any fixed $t\in[0, 1]$ the family $\{y^\ve(
\cdot,t)\}$ satisfies the LDP in ${L}_2(\mbR,\mu)$   with rate
function
\begin{equation}
\label{eq:2.7} I_t(h)=
\begin{cases}
\frac{1}{4\pi t}\int_{\mbR}\bigg(
\frac{\cF(h)(\lambda)}{\cF(\vf)(\lambda)} \bigg)^2d\lambda, \ h\in H_t,\\
\infty, \ h\notin H_t,
\end{cases}
\end{equation}
where $H_t$ is the set of functions from $L_2(\mbR, \mu)$ having square-integrable derivative.
\end{corl}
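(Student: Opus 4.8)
The plan is to deduce Corollary~\ref{corl:2.1} from Theorem~\ref{thm:2.1} by the contraction principle applied to the time-$t$ evaluation map. First I would observe that for fixed $t\in[0,1]$ the map
\[
\pi_t:\mfX\to L_2(\mbR,\mu),\qquad \pi_t(x)=x(\cdot,t),
\]
is continuous, since by \eqref{eq:2.3} one has $\|\pi_t(x)-\pi_t(\tilde x)\|_{L_2(\mbR,\mu)}\le\|x-\tilde x\|$. As $\pi_t(y^\ve)=y^\ve(\cdot,t)$, Theorem~\ref{thm:2.1} together with the contraction principle [9] gives at once that $\{y^\ve(\cdot,t)\}$ satisfies the LDP in $L_2(\mbR,\mu)$ with the good rate function
\[
I_t(g)=\inf\{\,I_y(h):\ h\in\mfX,\ h(\cdot,t)=g\,\},
\]
and it remains only to evaluate this infimum and to identify it with \eqref{eq:2.7}.

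For the evaluation the infimum may be taken over $h\in H$ (otherwise $I_y(h)=\infty$), i.e. over $h(u,s)=u+\int_0^s\int_{\mbR}\vf(u-p)a(p,\tau)\,dp\,d\tau$ with $a\in L_2(\mbR\times[0,1])$; then the constraint $h(\cdot,t)=g$ becomes $\int_0^t\dot h(\cdot,s)\,ds=g-\mathrm{id}$, where $\mathrm{id}(u)=u$. The values $a(\cdot,s)$ for $s\in(t,1]$ enter $I_y(h)$ but not this constraint, so at a minimizer $a(\cdot,s)=0$ there. On $[0,t]$, taking the Fourier transform and using Plancherel (with the normalization of Theorem~\ref{thm:2.1}) gives $\cF(\dot h(\cdot,s))/\cF(\vf)=\cF(a(\cdot,s))$, hence
\[
I_y(h)=\frac1{4\pi}\int_0^t\|\cF(a(\cdot,s))\|_{L_2(\mbR)}^2\,ds
\]
subject to $\cF(\vf)(\lambda)\int_0^t\cF(a(\cdot,s))(\lambda)\,ds=\cF(g-\mathrm{id})(\lambda)$. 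For every $\lambda$ the integral $\int_0^t\cF(a(\cdot,s))(\lambda)\,ds$ is prescribed, so the Cauchy--Schwarz inequality in $s$ yields $\int_0^t|\cF(a(\cdot,s))(\lambda)|^2\,ds\ge\frac1t\big|\cF(g-\mathrm{id})(\lambda)/\cF(\vf)(\lambda)\big|^2$, with equality precisely when $\cF(a(\cdot,s))(\lambda)$ is independent of $s\in[0,t]$. Thus the minimum is attained for $a(\cdot,s)=\frac1t\,\cF^{-1}\big(\cF(g-\mathrm{id})/\cF(\vf)\big)$ on $[0,t]$ and $0$ afterwards, and substituting back gives
\[
I_t(g)=\frac1{4\pi t}\int_{\mbR}\Big(\frac{\cF(g-\mathrm{id})(\lambda)}{\cF(\vf)(\lambda)}\Big)^2 d\lambda,
\]
which is \eqref{eq:2.7} (here $\cF$ is applied, as in Theorem~\ref{thm:2.1}, to the displacement $h-\mathrm{id}$; the expression is read as $+\infty$ when the integral diverges, and its effective domain, consisting of the $g$ with $\cF(g-\mathrm{id})/\cF(\vf)\in L_2(\mbR)$, lies inside $H_t$ because the rapid decay of $\cF(\vf)$ then forces $g$ to have a square-integrable derivative).

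I expect the only step with real content to be this last variational computation, which collapses an infimum over all paths in $H$ on $[0,1]$ to the single extremal ``travel at constant velocity on $[0,t]$, then stop'', the essential input there being convexity in the time variable via Cauchy--Schwarz. The continuity of $\pi_t$, the fact that the contraction principle transports the good-rate-function property, and the Plancherel bookkeeping are routine, and the only mild subtlety is the treatment of the affine term $\mathrm{id}(u)=u$ under the Fourier transform.
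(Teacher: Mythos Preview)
Your proof is correct and follows exactly the approach the paper indicates: the paper merely says ``The contraction principle [9] yields the LDP'' before stating the corollary, so your explicit evaluation of the infimum via Cauchy--Schwarz in the time variable fills in details the paper omits. You are also right to flag the affine term: the $\cF(h)$ in \eqref{eq:2.7} is to be read as the Fourier transform of the displacement $h-\mathrm{id}$, just as you write it.
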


\noindent {\it Remark}. Observe that $y^\ve$  is a random element in
$\cY=C([0, 1]; {L}_2(\mbR, \nu)),$    where $\nu$  is the measure on
$\mbR$ with the density $\frac{1}{1+|u|^l}, u\in\mbR, l>3.$ And for
any $p\geq2$  and fixed time $t,$   $y^\ve(\cdot,t)$  is a random
element in ${L}_p(\mbR, \mu).$  Using the same arguments as in
Theorem \ref{thm:2.1}  one can show, that $\{y^\ve\}$   satisfies
the LDP in $\cY$  and $\{y^\ve(\cdot,t)\}$   satisfies the LDP in
${L}_p(\mbR, \mu)$   with the same rate functions \eqref{eq:2.6} and
\eqref{eq:2.7}   correspondingly.

To prove the LDP for $x^\ve_m$ and $x^\ve$   we need an
additional assumption on the function $\vf.$  From now on we will
suppose, that the following relation holds:

{\bf H1}.     $ \vf=\psi_1*\psi_2, \ \psi_1, \psi_2\in S. $

 In the case $m>1,$  $x^\ve_m$ can
be represented as
\begin{equation}
\label{eq:2.8}
\begin{array}{l}
x^\ve_m(u,t)=x^\ve_m\bigg(u, \frac{k}{m}\bigg)+\\
+\sqrt{\ve}
\int_{\mbR}\psi_1\bigg(x^\ve_m\bigg(u,\frac{k}{m}\bigg)-q\bigg)
\int^t_{\frac{k}{m}}\int_{\mbR}\psi_2(q-p)W(dp, ds)dq,\\
t\in\bigg[\frac{k}{m}, \frac{k+1}{m}\bigg], k=0,\ldots, m-1.
\end{array}
\end{equation}
By the remark
$\bigg\{\sqrt{\ve}\int^\cdot_{\frac{k}{m}}\int_{\mbR}\psi_2(\cdot-p)W
(dp, ds)\bigg\}$ satisfies the LDP in $\cY$  and
$\bigg\{\sqrt{\ve}\int^{\frac{k+1}{m}}_{\frac{k}{m}}\int_{\mbR}\psi_2
(\cdot-p)W (dp, ds)\bigg\}$ satisfies the LDP in ${L}_p(\mbR, \mu),$
for any $p\geq2.$

Before formulating the main result in this section we will prove some lemmas establishing the continuity property of
some maps.
\begin{lem}
\label{lem:2.1}   For $f\in{L}_{pl}(\mbR,
\mu)$  and $g\in{L}_2(\mbR, \nu), p\geq2,$  define
$$
\cF^p(f, g)(u)=\int_{\mbR}\psi_2(f(u)-q)g(q)dq.
$$
Then $\cF^p$  is continuous map from
$
{L}_{pl}(\mbR, \mu)\times{L}_2(\mbR, \nu)$ to ${L}_p(\mbR, \mu).$
\end{lem}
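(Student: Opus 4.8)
The plan rests on the pointwise identity $\cF^p(f,g)(u)=(\psi_2*g)(f(u))$, where $\psi_2*g$ denotes the convolution $x\mapsto\int_{\mbR}\psi_2(x-q)g(q)\,dq$. First I would establish the auxiliary fact that, for every $g\in{L}_2(\mbR,\nu)$, this convolution is a well-defined \emph{continuous} function on $\mbR$ of at most polynomial growth. Writing $\psi_2(x-q)g(q)=\big[\psi_2(x-q)(1+|q|^l)^{1/2}\big]\cdot\big[g(q)(1+|q|^l)^{-1/2}\big]$ and applying the Cauchy--Schwarz inequality together with the Schwartz decay of $\psi_2$ (which gives $\int_{\mbR}\psi_2(x-q)^2(1+|q|^l)\,dq\le C(1+|x|^l)$), one obtains
$$
|(\psi_2*g)(x)|\le C\,(1+|x|^{l/2})\,\|g\|_{{L}_2(\mbR,\nu)},\qquad x\in\mbR,
$$
while continuity follows by dominated convergence applied to $\int_{\mbR}[\psi_2(x'-q)-\psi_2(x-q)]g(q)\,dq$ as $x'\to x$, the Schwartz decay again furnishing the majorant. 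Combined with $f\in{L}_{pl}(\mbR,\mu)$ and finiteness of $\mu$, this growth bound shows $\cF^p(f,g)\in{L}_p(\mbR,\mu)$, so the map is well defined.

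For the continuity of $\cF^p$ I would take $f_n\to f$ in ${L}_{pl}(\mbR,\mu)$ and $g_n\to g$ in ${L}_2(\mbR,\nu)$, and split
$$
\cF^p(f_n,g_n)-\cF^p(f,g)=\big(\psi_2*(g_n-g)\big)(f_n(\cdot))+\Big[(\psi_2*g)(f_n(\cdot))-(\psi_2*g)(f(\cdot))\Big].
$$
The first (``$g$'') term is the easy one: by the growth bound above,
\begin{align*}
\big\|\big(\psi_2*(g_n-g)\big)(f_n(\cdot))\big\|_{{L}_p(\mbR,\mu)}^p
&\le C\,\|g_n-g\|_{{L}_2(\mbR,\nu)}^p\int_{\mbR}\big(1+|f_n(u)|^{l/2}\big)^p\,\mu(du)\\
&\le C\,\|g_n-g\|_{{L}_2(\mbR,\nu)}^p\Big(1+\sup_n\|f_n\|_{{L}_{pl}(\mbR,\mu)}^{pl/2}\Big),
\end{align*}
and since the convergent sequence $\{f_n\}$ is bounded in ${L}_{pl}(\mbR,\mu)$, this tends to $0$.

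For the second (``$f$'') term I would set $G:=\psi_2*g$, a fixed continuous function with $|G(x)|\le C(1+|x|^{l/2})$, and argue via Vitali's convergence theorem. Since $f_n\to f$ in ${L}_{pl}(\mbR,\mu)$ it converges in $\mu$-measure, whence $G(f_n(\cdot))\to G(f(\cdot))$ in $\mu$-measure by continuity of $G$; and the family $\{|G(f_n(\cdot))|^p\}$ is uniformly integrable because $|G(f_n(\cdot))|^p\le C(1+|f_n(\cdot)|^{pl/2})$ while $\{|f_n|^{pl/2}\}$ is bounded in ${L}_2(\mbR,\mu)$ (indeed $\sup_n\int_{\mbR}|f_n|^{pl}\,d\mu<\infty$) and $\mu$ is a finite measure. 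Vitali's theorem then gives $G(f_n(\cdot))\to G(f(\cdot))$ in ${L}_p(\mbR,\mu)$, and combining the two terms yields $\cF^p(f_n,g_n)\to\cF^p(f,g)$ in ${L}_p(\mbR,\mu)$.

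I expect the main obstacle to be exactly this second term: because $\psi_2*g$ is only of polynomial growth rather than bounded, and $f$ lies merely in ${L}_{pl}(\mbR,\mu)$, one cannot simply invoke dominated convergence after composition, and the substitute is convergence in measure plus the uniform integrability furnished by the $L_{pl}$-bound. A secondary point requiring care is the use of the Schwartz decay of $\psi_2$ to make sense of — and to establish continuity and polynomial growth of — the convolution against elements of the weighted space ${L}_2(\mbR,\nu)$ in the first place.
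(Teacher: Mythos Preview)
Your argument is correct and follows the same overall scheme as the paper: the identical two-term splitting (vary $g$ first, then $f$), the same Cauchy--Schwarz estimate against the weight $(1+|q|^l)$ yielding $\int_{\mbR}\psi_2^2(x-q)(1+|q|^l)\,dq\le C(1+|x|^l)$, and the same use of the uniform ${L}_{pl}(\mu)$-bound on $\{f_n\}$ to control the $g$-term.

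The one genuine difference is in the treatment of the $f$-term. The paper does not isolate the composition structure; it passes (via the subsequence trick) to an a.e.\ convergent $\{f_n\}$ and then shows directly that the two-variable integrands $b_n(u,q)=(\psi_2(f_n(u)-q)-\psi_2(f(u)-q))^p(1+|q|^l)^{p/2}$ are uniformly integrable on $\mbR\times\mbR$ by bounding $\sup_n\int\!\!\int b_n^2\,dq\,\mu(du)$. Your route --- recognizing $\cF^p(f,g)=G\circ f$ with $G=\psi_2*g$ continuous of polynomial growth, deducing $G(f_n)\to G(f)$ in $\mu$-measure from $f_n\to f$ in measure, and closing with Vitali using the ${L}_2(\mu)$-bound on $|f_n|^{pl/2}$ --- is a cleaner packaging of the same idea: it keeps the uniform-integrability argument in the single variable $u$ and avoids the subsequence reduction. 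Either way the substance is the same Schwartz-decay-plus-UI mechanism, so your proof would be fully acceptable in place of the paper's.
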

\begin{proof}  Let $f_n\to f, n\to\infty$   in ${L}_{pl}(\mbR,\mu)$
and $g_n\to g, n\to\infty$  in ${L}_2(\mbR, \nu).$ Then
$$
\|\cF^p(f_n, g_n)-\cF^p(f, g)\|_{{L}_p(\mu)}\leq
$$
$$
\leq \|\cF^p(f_n,
g_n)-\cF^p(f_n,g)\|_{{L}_p(\mu)}+\|\cF^p(f_n,g)-\cF^p(f,g)\|_{
{L}_p(\mu)} \leq
$$
$$
\leq \|g_n-g\|_{{L}_2(\nu)} \bigg[ \int_{\mbR}\bigg(
\int_{\mbR}\psi^2_2(f_n(u)-q)(1+|q|^l)dq \bigg)^{p/2}\mu(du)
\bigg]^{1/p}+
$$
$$
+ \|g\|_{{L}_2(\nu)} \bigg[ \int_{\mbR}\bigg( \int_{\mbR} (
\psi_2(f_n(u)-q)-\psi_2(f(u)-q))^2(1+|q|^l)dq \bigg)^{p/2}\mu(du)
\bigg]^{1/p}.
$$
Observe that for some $C>0$
$$
\int_{\mbR}\psi^2_2(f(u)-q)(1+|q|^l)dq\leq C(|f(u)|^l+1).
$$
Since $f_n\to f, n\to\infty$ in ${L}_{pl}(\mbR, \mu),$  there exists
a constant $C_1>0$ such that
$$\|f_n\|_{{L}_{pl}(\mu)}\leq C_1, \
n\geq1.
$$

   So, for some constant $C_2>0$
$$
\int_{\mbR}\bigg(
\int_{\mbR}\psi^2_2(f_n(u)-q)(1+|q|^l)dq
\bigg)^{p/2}\mu(du)\leq
$$
$$
\leq
\int_{\mbR}C^{p/2}(|f_n(u)|^l+1)^{p/2}\mu(du)\leq
$$
$$
\leq C^{p/2}(\|f_n\|^l_{{L}_{pl}(\mu)}+1)^p\leq C_2, \ n\geq1.
$$
Without loss of generality, we assume that $\{f_n\}$  converges to
$f$   almost everywhere. Denote
$$
b_n(u,q)=(\psi_2(f_n(u)-q)-\psi_2(f(u)-q))^p(1+|q|^l)^{p/2}.
$$
Let us check, that
$$
sup_{n\geq1}
\int_{\mbR}\int_{\mbR}b^2_n(u, q)dq\mu(du)<\infty.
$$
Indeed, for some $C_3>0$
$$
\int_{\mbR}\int_{\mbR}b^2_n(u, q)dq\mu(du)\leq
$$
$$
\leq 2^{2p-1}
\bigg(
\int_{\mbR}
\int_{\mbR}
\psi^{2p}_2(f_n(u)-q)(1+|q|^l)^p dq\mu(du)+
$$
$$
+
\int_{\mbR}
\int_{\mbR}
\psi^{2p}_2(f(u)-q)(1+|q|^l)^p dq\mu(du)\bigg)\leq
$$
$$
\leq 2^{2p-1}C^p \bigg( (\|f_n\|^l_{{L}_{pl}(\mu)}+1)^p +
(\|f\|^l_{{L}_{pl}(\mu)}+1)^p\bigg)\leq C_3, \ n\geq1.
$$
This estimation establishes uniform integrability of $\{b_n\}$  and
proves the lemma.
\end{proof}

\begin{corl}
\label{corl:2.2}   For $f\in{L}_{2l}(\mbR,
\mu)$   and $g\in\cY$  define
$$
\cF(f, g)(t, u)=\int_{\mbR}\psi_2(f(u)-q)g(t, q)dq, \ t\in[0;1], u\in\mbR.
$$
Then $\cF$ is continuous map from ${L}_{2l}(\mbR, \mu)\times\cY$  to
$\mfX.$
\end{corl}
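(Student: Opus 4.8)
The plan is to deduce this from Lemma \ref{lem:2.1} with $p=2$, observing that for each fixed $t$ one has $\cF(f,g)(t,\cdot)=\cF^2(f,g(t,\cdot))$, and then to upgrade the pointwise-in-$t$ continuity given by that lemma to continuity uniform in $t$. First I would check that $\cF(f,g)$ really is an element of $\mfX$, i.e. that $t\mapsto\cF(f,g)(t,\cdot)$ is continuous in $L_2(\mbR,\mu)$. For $s,t\in[0;1]$,
$$
\|\cF(f,g)(t,\cdot)-\cF(f,g)(s,\cdot)\|_{L_2(\mu)}\le\|g(t,\cdot)-g(s,\cdot)\|_{L_2(\nu)}\Big(\int_\mbR\Big(\int_\mbR\psi_2^2(f(u)-q)(1+|q|^l)\,dq\Big)\mu(du)\Big)^{1/2},
$$
and the last factor is finite by the inequality $\int_\mbR\psi_2^2(f(u)-q)(1+|q|^l)\,dq\le C(|f(u)|^l+1)$ used in the proof of Lemma \ref{lem:2.1} together with $f\in L_{2l}(\mbR,\mu)$; continuity of $t\mapsto g(t,\cdot)$ in $L_2(\mbR,\nu)$ then gives the desired continuity.

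Next, for the continuity of $\cF$, take $f_n\to f$ in $L_{2l}(\mbR,\mu)$ and $g_n\to g$ in $\cY$. Exactly as in the proof of Lemma \ref{lem:2.1} I split, for each $t$,
$$
\|\cF(f_n,g_n)(t,\cdot)-\cF(f,g)(t,\cdot)\|_{L_2(\mu)}\le\|\cF^2(f_n,g_n(t,\cdot))-\cF^2(f_n,g(t,\cdot))\|_{L_2(\mu)}+\|\cF^2(f_n,g(t,\cdot))-\cF^2(f,g(t,\cdot))\|_{L_2(\mu)}.
$$
The first term is bounded by $\|g_n(t,\cdot)-g(t,\cdot)\|_{L_2(\nu)}$ times a constant that, thanks to the uniform bound $\sup_n\|f_n\|_{L_{2l}(\mu)}\le C_1$ from the convergence $f_n\to f$, depends on neither $n$ nor $t$; since $\sup_{t}\|g_n(t,\cdot)-g(t,\cdot)\|_{L_2(\nu)}\to0$ by the definition of convergence in $\cY$, this term tends to $0$ uniformly in $t$.

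The remaining point — and the only genuine obstacle — is that the second term must also tend to $0$ uniformly in $t$, whereas $g(t,\cdot)$ ranges over the (in general infinite) set $K=\{g(t,\cdot):t\in[0;1]\}$, so Lemma \ref{lem:2.1} applies only pointwise in $t$. This is handled by compactness: $K$ is the continuous image of $[0;1]$, hence compact in $L_2(\mbR,\nu)$. Given $\delta>0$, choose $t_1,\dots,t_N$ so that the balls of radius $\delta$ about $g(t_j,\cdot)$ cover $K$. Using the linearity of $\cF^2$ in its second argument and the bound from Lemma \ref{lem:2.1}, with the uniform constant coming from $\sup_n\|f_n\|_{L_{2l}(\mu)}\le C_1$ and from $\|f\|_{L_{2l}(\mu)}<\infty$, one gets for every $t$ and the nearest $t_j$
$$
\|\cF^2(f_n,g(t,\cdot))-\cF^2(f,g(t,\cdot))\|_{L_2(\mu)}\le\|\cF^2(f_n,g(t_j,\cdot))-\cF^2(f,g(t_j,\cdot))\|_{L_2(\mu)}+C\delta,
$$
with $C$ independent of $n$ and $t$. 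By Lemma \ref{lem:2.1} each of the finitely many terms $\|\cF^2(f_n,g(t_j,\cdot))-\cF^2(f,g(t_j,\cdot))\|_{L_2(\mu)}$ tends to $0$ as $n\to\infty$, so $\limsup_n\sup_t\|\cF^2(f_n,g(t,\cdot))-\cF^2(f,g(t,\cdot))\|_{L_2(\mu)}\le C\delta$; letting $\delta\to0$ shows this supremum goes to $0$. Combining the two terms gives $\sup_t\|\cF(f_n,g_n)(t,\cdot)-\cF(f,g)(t,\cdot)\|_{L_2(\mu)}\to0$, that is $\cF(f_n,g_n)\to\cF(f,g)$ in $\mfX$, which is the assertion.
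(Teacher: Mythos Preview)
Your argument is correct. The paper states Corollary \ref{corl:2.2} without proof, as an immediate consequence of Lemma \ref{lem:2.1}, so there is no explicit argument to compare against; your write-up is a legitimate elaboration of why the passage from the pointwise Lemma \ref{lem:2.1} to the uniform-in-$t$ statement goes through. The compactness step (using that $\{g(t,\cdot):t\in[0;1]\}$ is compact in $L_2(\mbR,\nu)$ together with linearity of $\cF^2$ in its second argument) is the right device to upgrade the pointwise convergence from Lemma \ref{lem:2.1} to uniform convergence over the curve, and your bounds on the first term reuse exactly the estimates $\int_\mbR\psi_2^2(f_n(u)-q)(1+|q|^l)\,dq\le C(|f_n(u)|^l+1)$ and $\sup_n\|f_n\|_{L_{2l}(\mu)}\le C_1$ from that proof.
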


The following representation holds for every $k=1, \ldots, m$
$$
x^\ve_m\left(u,\frac{k}{m}\right)=\cF^{2l} \left(x^\ve_m
\left(\cdot,\frac{k-1}{m}\right), y^\ve
\left(\cdot,\frac{k}{m}\right)-
y^\ve\left(\cdot,\frac{k-1}{m}\right)\right)(u),
$$
$$
x^\ve_m\left(u,\frac{k-1}{m}\right)=\cF^{2l^2} \left(x^\ve_m
\left(\cdot,\frac{k-2}{m}\right), y^\ve
\left(\cdot,\frac{k-1}{m}\right)-
y^\ve\left(\cdot,\frac{k-2}{m}\right)\right)(u), \ldots
$$
$$
x^\ve_m\left(u,\frac{1}{m}\right)=\cF^{2l^k} \left(e,
y^\ve\left(\cdot,\frac{1}{m}\right)\right)(u),
$$
where \begin{equation} \label{2.9'}
y^\ve(u,t)=\sqrt{\ve}\int^t_0\int_{\mbR}\psi_2(u-p)W(dp, ds), \
t\in[0;1], u\in\mbR,
\end{equation}
$$
 e(u)=u, u\in\mbR.
$$
Consequently, $x^\ve_m\left(\cdot,\frac{k}{m}\right)$ is the image of
$y^\ve$  under some continuous map from $\cY$  to ${L}_{2l}(\mbR, \mu).$
By induction one can prove that  $x^\ve_m$  can be represented as the
image of $y^\ve$  under a continuous map from $\cY$ to $\mfX$ and,
consequently, satisfy the LDP. We will show that $\{x^\ve_m\}$ are
exponentially good approximations of $\{x^\ve\},$ i.e. for every
$\delta>0$
$$
\lim_{m\to\infty}\mathop{\varlimsup}\limits_{\ve\to0}\ve\log
P\{\|x^\ve_m-x^\ve\|>\delta\}=-\infty,
$$
where the norm $\|\cdot\|$ is taken in the space $\mfX.$   Following
[9], let us prove some auxiliary results.

Consider the stochastic flow  described by the equation
$$
\begin{array}{l}
dz^\ve(u,t)=\sqrt{\ve}\int_{\mbR}\psi(t,u,p)W(dp, dt),\\
z^\ve(u,0)=\alpha(u), \ u\in\mbR, \ t\in[0;1],
\end{array}
$$
where $\alpha\in{L}_2(\mbR, \mu),$ $\psi\in{L}_2([0,
1]\times\mbR\times\mbR\times\Omega),$ for every $u\in\mbR$  \
$\psi(\cdot, u, \cdot)$ is progressively measurable with respect to the
filtration $\cF_t=\sigma\{w(\Delta), \Delta\subset\mbR\times[0,
t]\}.$ For every $t\in[0;1],$  $z^\ve( \cdot,t)\in{L}_2(\mbR,\mu)$
and $\|z^\ve(\cdot,t)\|^2_{{L}_2(\mu)}$ has the stochastic
differential [3]
$$
d\|z^\ve(\cdot,t)\|^2_{{L}_2(\mu)}=2\sqrt{\ve}\int_{\mbR}\int_{\mbR}
z^\ve(u,t)\psi(t,u,p)\mu(du)W(dp,dt)+
$$
$$
+\ve\int_{\mbR}\int_{\mbR}\psi^2(t,u,p)\mu(du)dpdt.
$$

Let $\tau\in[0;1]$  be a stopping time with respect to the
filtration $\cF_t.$  Suppose, that for some constants ${L}, \rho$
and any $t\in[0, \tau]:$
$$
\int_{\mbR}\int_{\mbR}\psi^2(t,u,p)\mu(du)dp\leq {L}(\|z^\ve(
\cdot,t)\|^2_{{L}_2(\mu)}+\rho^2).
$$
\begin{lem}
\label{lem:2.2} For any $\delta>0,$   $\ve\leq 1$
$$
\ve\log P\{\sup_{t\in[0, \tau]}\|z^\ve(
\cdot,t)\|_{{L}_2(\mu)}\geq\delta\}\leq
{L}+\log\frac{\rho^2+\|\alpha\|^2_{{L}_2(\mu)}}{\rho^2+\delta^2}.
$$
\end{lem}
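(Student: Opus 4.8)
The plan is to apply the exponential supermartingale (Freidlin--Wentzell/Bernstein-type) bound to a suitable functional of $\|z^\ve(\cdot,t)\|^2_{L_2(\mu)}$. Write $R_t=\|z^\ve(\cdot,t)\|^2_{L_2(\mu)}$. From the stochastic differential displayed just before the lemma,
$$
d(R_t+\rho^2)=2\sqrt{\ve}\,\sigma_t\,dw_t+\ve\,b_t\,dt,
$$
where $\sigma_t$ is the (scalar) martingale-part coefficient obtained by integrating against $W$, so that $d\lg M\rg_t=4\ve\sigma_t^2\,dt$ with $M$ the martingale part, and $b_t=\int_\mbR\int_\mbR\psi^2(t,u,p)\mu(du)dp\le L(R_t+\rho^2)$ by hypothesis for $t\le\tau$. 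The natural move is to look at $\log(R_t+\rho^2)$: by It\^o's formula,
$$
d\log(R_t+\rho^2)=\frac{2\sqrt\ve\,\sigma_t}{R_t+\rho^2}\,dw_t+\frac{\ve b_t}{R_t+\rho^2}\,dt-\frac{2\ve\sigma_t^2}{(R_t+\rho^2)^2}\,dt.
$$
The first drift term is $\le \ve L$ for $t\le\tau$, and the second (It\^o-correction) drift term is $\le 0$. Hence on $[0,\tau]$,
$$
\log(R_t+\rho^2)\le \log(R_0+\rho^2)+\ve L t+N_t,\qquad N_t:=\int_0^t\frac{2\sqrt\ve\,\sigma_s}{R_s+\rho^2}\,dw_s,
$$
a continuous local martingale.

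Next I would control $\sup_{t\le\tau}N_t$ exponentially. The key observation is that the quadratic variation of $N$ is controlled in a way that makes $\exp(\lambda N_t-\tfrac{\lambda^2}{2}\lg N\rg_t)$ a supermartingale for the right choice of $\lambda=1/\ve$; one checks $\lg N\rg_t=\int_0^t \tfrac{4\ve\sigma_s^2}{(R_s+\rho^2)^2}\,ds$ and pairs it against the discarded It\^o-correction term. Concretely, the cleaner route is: since $-\tfrac{2\ve\sigma_s^2}{(R_s+\rho^2)^2}=-\tfrac12\cdot\tfrac{1}{\ve}\cdot\lg N\rg_s'$, the bound above already reads
$$
\log(R_t+\rho^2)\le\log(R_0+\rho^2)+\ve L t+N_t-\tfrac{1}{2\ve}\lg N\rg_t+\tfrac{1}{2\ve}\lg N\rg_t-\ldots,
$$
so in fact it is more economical to keep the It\^o-correction term and note that $E\exp\big(\tfrac1\ve N_{t\wedge\tau}-\tfrac{1}{2\ve^2}\lg N\rg_{t\wedge\tau}\big)\le1$. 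Then for any $c$,
$$
P\Big\{\sup_{t\le\tau}\big(\log(R_t+\rho^2)-\ve L t\big)\ge c\Big\}\le P\Big\{\sup_{t\le\tau}\big(N_t-\tfrac{1}{2\ve}\lg N\rg_t\big)\ge c-\log(R_0+\rho^2)\Big\},
$$
and the event on the right forces $\exp(\tfrac1\ve N-\tfrac{1}{2\ve^2}\lg N\rg)\ge\exp(\tfrac1\ve(c-\log(R_0+\rho^2)))$ at the corresponding stopping time (using that $-\tfrac{2\ve\sigma^2}{(R+\rho^2)^2}=-\tfrac{1}{2\ve}\lg N\rg'$ matches exactly the supermartingale exponent). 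By the maximal inequality for nonnegative supermartingales,
$$
P\{\cdots\}\le \exp\!\Big(-\tfrac1\ve\big(c-\log(R_0+\rho^2)\big)\Big).
$$

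Finally I would set $c=\log(\rho^2+\delta^2)$. On the event $\{\sup_{t\le\tau}\|z^\ve(\cdot,t)\|_{L_2(\mu)}\ge\delta\}$ we have $\sup_{t\le\tau}(R_t+\rho^2)\ge\rho^2+\delta^2$, hence $\sup_{t\le\tau}\big(\log(R_t+\rho^2)-\ve L t\big)\ge \log(\rho^2+\delta^2)-\ve L\ge \log(\rho^2+\delta^2)-L$ (using $\ve\le1$ and, if one wishes, $t\le1$; in fact keeping $\ve Lt\le\ve L\le L$). Combining with the supermartingale bound and $R_0=\|\alpha\|^2_{L_2(\mu)}$ gives
$$
P\Big\{\sup_{t\le\tau}\|z^\ve(\cdot,t)\|_{L_2(\mu)}\ge\delta\Big\}\le\exp\!\Big(\tfrac{L}{\ve}+\tfrac1\ve\log\tfrac{\rho^2+\|\alpha\|^2_{L_2(\mu)}}{\rho^2+\delta^2}\Big),
$$
and taking $\ve\log$ of both sides yields the claimed inequality.

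The main obstacle is the bookkeeping that makes the It\^o-correction term in $d\log(R_t+\rho^2)$ exactly the quadratic-variation exponent $-\tfrac{1}{2\ve}\lg N\rg'$ needed for the supermartingale, rather than merely having the right sign; one must be careful about the factor $\ve$ entering both through $\sqrt\ve$ in the diffusion coefficient and through the Girsanov-type exponent $\lambda=1/\ve$, and about the fact that $\lg N\rg$ itself carries a factor $\ve$. A secondary technical point is justifying the optional-stopping/maximal-inequality step for the local martingale $N$ up to the stopping time $\tau$ (standard localization, since $R_s+\rho^2\ge\rho^2>0$ keeps all integrands well-behaved, and $\tau\le1$).
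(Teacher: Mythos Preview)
Your overall strategy --- study $(R_t+\rho^2)^{1/\ve}$ via It\^o and an exponential supermartingale, then Chebyshev --- is exactly the paper's; they apply It\^o directly to $\Phi(t)=(\rho^2+\|z^\ve(\cdot,t)\|^2)^{1/\ve}$, bound the drift by $L\Phi/\ve$, Gronwall to $E\Phi(\tau_1)\le\Phi(0)e^{L/\ve}$, and finish with Chebyshev. Your route via $\log(R_t+\rho^2)$ is the same computation in additive form.

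However, there is a genuine arithmetic slip that creates a gap. With $N_t=\int_0^t\frac{dM_s}{R_s+\rho^2}$ and $d\langle M\rangle_t=4\ve\sigma_t^2\,dt$, one has $\langle N\rangle'_t=\dfrac{4\ve\sigma_t^2}{(R_t+\rho^2)^2}$, so the It\^o correction in $d\log(R_t+\rho^2)$ equals $-\tfrac12\langle N\rangle'_t$, \emph{not} $-\tfrac{1}{2\ve}\langle N\rangle'_t$ as you claim. Consequently $\log(R_t+\rho^2)\le\log(R_0+\rho^2)+\ve Lt+N_t-\tfrac12\langle N\rangle_t$, and when you multiply by $1/\ve$ you get $\tfrac1\ve N_t-\tfrac{1}{2\ve}\langle N\rangle_t$, which differs from the supermartingale exponent $\tfrac1\ve N_t-\tfrac{1}{2\ve^2}\langle N\rangle_t$ by the \emph{positive} term $\tfrac{1-\ve}{2\ve^2}\langle N\rangle_t$. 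You cannot discard this; you must bound it, and for that you need the second estimate the paper actually uses but you never invoke: by Cauchy--Schwarz and the hypothesis,
\[
\sigma_t^2=\int_{\mbR}\Big(\int_{\mbR} z^\ve\psi\,\mu(du)\Big)^2dp\;\le\;\|z^\ve(\cdot,t)\|^2_{L_2(\mu)}\int_{\mbR}\int_{\mbR}\psi^2\,\mu\,dp\;\le\;L(R_t+\rho^2)^2,
\]
whence $\langle N\rangle'_t\le 4\ve L$ on $[0,\tau]$. With this extra input the residual term is $\le \tfrac{2(1-\ve)L}{\ve}t$ and the argument closes (with $3L$ in place of $L$). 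Without it, your maximal-inequality step does not follow from what you have written. The paper's direct It\^o computation on $\Phi$ packages both bounds --- on $b_t$ and on $\sigma_t^2$ --- into a single drift estimate $f_2\le L\Phi/\ve$, which is why it does not stumble here.
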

\begin{proof}
Let $\Phi(t)=f(z^\ve(\cdot,t)),$  where
$$
f(z)=(\rho^2+\|z\|^2_{{L}_2(\mu)})^{1/\ve}.
$$
Using the It\^o formula, we have
$$
d\Phi(t)=\frac{\Phi(t)}{\rho^2+\|z^\ve(\cdot,t)\|^2_{{L}_2(\mu)}}
\bigg[ \int_{\mbR} \int_{\mbR} \psi^2(t,u,p)\mu(du)dp+
$$
$$
+\frac{1}{2}\big(\frac{1}{\ve}-1\big)
\frac{1}{\rho^2+\|z^\ve(\cdot,t)\|^2_{{L}_2(\mu)}} \int_{\mbR}
\bigg( \int_{\mbR} z^\ve(u,t)\psi(t,u,p)\mu(du) \bigg)^2 dp\bigg]dt+
$$
$$
+\frac{2}{\sqrt{\ve}}
\frac{\Phi(t)}{\rho^2+\|z^\ve(\cdot,t)\|^2_{{L}_2(\mu)}} \int_{\mbR}
\int_{\mbR} z^\ve(u,t)\psi(t,u,p)\mu(du)W(dp, dt)=
$$
$$
=\int_{\mbR}f_1(p, t)W(dp, dt)+f_2(t)dt.
$$
Note that for $t\leq\tau$
$$
\int_{\mbR} \bigg( \int_{\mbR} z^\ve(u,t)\psi(t,u,p)\mu(du) \bigg)^2
dp\leq
$$
$$
\leq \|z^\ve(\cdot,t)\|_{{L}_2(\mu)}
{L}(\|z^\ve(\cdot,t)\|_{{L}_2(\mu)}+ \rho^2)\leq
$$
$$
\leq{L}(\|z^\ve(\cdot,t)\|_{{L}_2(\mu)}+\rho^2)^2.
$$
Then
$$
\int_{\mbR}f^2_1(p, t)dp\leq\frac{4{L}\Phi^2(t)}{\ve}
$$
and
$$
f_2(t)\leq \frac{{L} \Phi(t)}{\ve}.
$$
Fix $\delta>0$  and define the stopping time $\tau_1=\inf\{t: \
\|z^\ve(\cdot,t)\|_{{L}_2(\mu)}\geq\delta\}\wedge\tau.$ Since the norm
$\|f_1(t,
\cdot)\|_{L_2(\mbR)}\leq\frac{2\sqrt{{L}}}{\sqrt{\ve}}\Phi(t)$ is
uniformly bounded on $[0, \tau_1],$ it follows that
$\bigg(\Phi(t)-\int^t_0f_2(s)ds\bigg)$ is a continuous martingale up
to $\tau_1.$   Therefore
$$
E\Phi(t\wedge\tau_1)=\Phi(0)+E\int^{t\wedge\tau_1}_0f_2(s)ds \leq
$$
$$
\leq\Phi(0)+\frac{{L}}{\ve}E\int^{t\wedge\tau_1}_0\Phi(s)ds=
\Phi(0)+
\frac{{L}}{\ve}E\int^{t\wedge\tau_1}_0\Phi(s\wedge\tau_1)ds=
$$
$$
=\Phi(0)+ \frac{{L}}{\ve} \int^t_0E\Phi(s\wedge\tau_1)ds.
$$
Consequently, by Gronwall's lemma
$$
E\Phi(\tau_1)=E\Phi(\tau_1\wedge1)\leq\Phi(0)e^{{L}/\ve}.
$$
Therefore, by Chebychev's inequality, we have that
$$
P\{\|z^\ve(\cdot,\tau_1)\|_{{L}_2(\mu)}\geq\delta\}=
 P\{f(\|z^\ve(\cdot,\tau_1)\|_{{L}_2(\mu)})\geq f(\delta)\}\leq
$$
$$
\leq
 \frac{Ef(\|z^\ve(\cdot,\tau_1)\|_{{L}_2(\mu)})}{f(\delta)}
=\frac{E\Phi(\tau_1)}{f(\delta)}.
$$
Since $ \sup_{t\in[0,\tau]}\|z^\ve(\cdot,t)\|_{{L}_2(\mu)}\geq\delta
$ iff $ \|z^\ve(\cdot,\tau_1)\|_{{L}_2(\mu)}\geq\delta, $ then
$$
\ve\log P\{ \sup_{t\in[0,\tau]} \|z^\ve( \cdot,
t)\|_{{L}_2(\mu)}\geq\delta\}=
 \ve\log P\{ \|z^\ve(\cdot,\tau_1)\|_{{L}_2(\mu)}\geq\delta\}\leq
$$
$$
\leq \ve\log \frac {(\rho^2+\|\alpha\|^2_{L_2(\mu)})^{1/\ve}
e^{L/\ve}} {f(\delta)}=
{L}+\log\frac{\rho^2+\|\alpha\|^2_{{L}_2(\mu)}}{\rho^2+\delta^2}.
$$
The lemma is proved.

\end{proof}
\begin{lem}
\label{lem:2.3}
For $\alpha<1/2$ there exists a constant $C>0$  such that for any
$0<\ve<1, m\geq1:$
$$
E\exp \bigg\{\alpha \sup_{t\in\left[\frac{i}{m},
\frac{i+1}{m}\right]} \frac{m}{\ve}\|x^\ve_m(
\cdot,t)-x^\ve_m\left(\cdot, \frac{i}{m}\right)\|^2
_{{L}_2(\mu)}\bigg\}\leq C, \ i=0, \ldots, m-1.
$$
\end{lem}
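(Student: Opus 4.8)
\emph{Proof proposal.} The plan is to exploit the frozen-coefficient structure of \eqref{eq:2.4}. On the interval $\left[\tfrac im,\tfrac{i+1}m\right)$ the coefficient of the noise in the equation for $x^\ve_m(u,\cdot)$ equals $\sqrt\ve\,\vf\!\left(x^\ve_m\!\left(u,\tfrac im\right)-p\right)$, which does not depend on time and is measurable with respect to $\cF_{i/m}$. Hence, setting $v(u,t):=x^\ve_m\!\left(u,\tfrac im+t\right)-x^\ve_m\!\left(u,\tfrac im\right)$ for $t\in\left[0,\tfrac1m\right]$, the process $t\mapsto v(u,t)$ is, for each fixed $u$, a continuous martingale starting at $0$ with quadratic variation
$$
\langle v(u,\cdot)\rangle_t=\ve\int_0^t\int_{\mbR}\vf^2\!\left(x^\ve_m\!\left(u,\tfrac im\right)-p\right)dp\,ds=\ve t
$$
(using $\int_{\mbR}\vf^2=1$). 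By L\'evy's characterisation it is $\sqrt\ve$ times a standard Brownian motion, so Brownian scaling yields
$$
\sup_{t\in[0,1/m]}v(u,t)^2\ \stackrel{d}{=}\ \frac\ve m\,\sup_{s\in[0,1]}B(s)^2,
$$
where $B$ is a standard Brownian motion. A jointly measurable version of $(u,t,\omega)\mapsto v(u,t)$ is available because $x^\ve_m$ is a flow of homeomorphisms, which makes the Fubini/Tonelli steps below legitimate.

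Next I would pass from the $L_2(\mu)$-norm to a bound pointwise in $u$. For every $t$,
$$
\|v(\cdot,t)\|^2_{L_2(\mu)}=\int_{\mbR}v(u,t)^2\mu(du)\le\int_{\mbR}\sup_{s\le1/m}v(u,s)^2\,\mu(du);
$$
taking the supremum over $t$, then applying Jensen's inequality (convexity of $\exp$, $\mu(\mbR)=1$) pathwise, then expectation and Tonelli, and finally the distributional identity above together with $\mu(\mbR)=1$,
\begin{align*}
E\exp\!\left\{\alpha\frac m\ve\sup_{t\in[0,1/m]}\|v(\cdot,t)\|^2_{L_2(\mu)}\right\}
&\le\int_{\mbR}E\exp\!\left\{\alpha\frac m\ve\sup_{s\le1/m}v(u,s)^2\right\}\mu(du)\\
&=E\exp\!\left\{\alpha\sup_{s\in[0,1]}B(s)^2\right\}.
\end{align*}
Since $\sup_{t\in[i/m,(i+1)/m]}\|x^\ve_m(\cdot,t)-x^\ve_m(\cdot,\tfrac im)\|^2_{L_2(\mu)}=\sup_{t\in[0,1/m]}\|v(\cdot,t)\|^2_{L_2(\mu)}$, the whole lemma is reduced to a uniform bound on this single scalar exponential moment.

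That bound is the only real computation. By the reflection principle $P\big(\sup_{s\le1}|B(s)|\ge a\big)\le2P(|B(1)|\ge a)$, hence $P\big(\sup_{s\le1}B(s)^2\ge x\big)\le2P(B(1)^2\ge x)$, so that for every $\alpha<1/2$
$$
E\exp\!\left\{\alpha\sup_{s\le1}B(s)^2\right\}\le 2\,E e^{\alpha B(1)^2}-1=\frac2{\sqrt{1-2\alpha}}-1=:C<\infty ,
$$
a constant independent of $\ve\in(0,1)$, $m\ge1$ and $i$, which is exactly the assertion.

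I expect the main obstacle to be recognising the right strategy rather than executing it. In particular, the It\^o--Gronwall device used for Lemma~\ref{lem:2.2} is too weak here: since $\int_{\mbR}\!\int_{\mbR}\vf^2(\,\cdot\,-p)\,\mu(du)\,dp\equiv1$ does not decay with $\|v(\cdot,t)\|_{L_2(\mu)}$, applying that lemma only produces a polynomial tail for $\sup_t\|v(\cdot,t)\|_{L_2(\mu)}$, which gives no exponential moment at all. The way around is to abandon working directly in $\mfX$ and, using that after freezing the coefficients $v(u,\cdot)$ is (conditionally) Gaussian, to reduce via Jensen and Fubini to the exponential integrability of the running maximum of a single Brownian motion. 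The one technical point needing attention along the way is the joint measurability of $(u,\omega)\mapsto\sup_t v(u,t)^2$, which is guaranteed by the continuity of the flow.
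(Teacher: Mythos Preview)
Your proof is correct and follows essentially the same strategy as the paper's: both move the supremum over $t$ inside the $\mu$-integral, apply Jensen's inequality, and reduce via Fubini to a one-dimensional moment estimate for a single Brownian motion on $[0,1/m]$. The only difference is in that last step --- the paper expands the exponential as a power series and bounds each $2n$-th moment with Doob's $L^p$ maximal inequality, arriving at $C=\sum_{n\ge0}\tfrac{\alpha^n}{n!}\bigl(\tfrac{2n}{2n-1}\bigr)^{2n}(2n-1)!!$, whereas you invoke L\'evy's characterisation and the reflection principle to obtain the cleaner constant $C=2/\sqrt{1-2\alpha}-1$.
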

\begin{proof}
Let us write
$$
\exp \bigg\{\alpha \sup_{t\in\left[\frac{i}{m},
\frac{i+1}{m}\right]} \frac{m}{\ve}\|x^\ve_m(
\cdot,t)-x^\ve_m\left(\cdot, \frac{i}{m}\right)\|^2
_{{L}_2(\mu)}\bigg\}=
$$
$$
=\sum^\infty_{n=0}\frac{\alpha^n}{n!} \bigg(
\sup_{t\in\left[\frac{i}{m}, \frac{i+1}{m}\right]} \frac{m}{\ve}
\|x^\ve_m(\cdot,t)-x^\ve_m\left(\cdot,\frac{i}{m}\right)\|^2
_{{L}_2(\mu)}\bigg)^n.
$$
A general  term of this series can be estimated as
$$
E\bigg( \sup_{t\in\left[\frac{i}{m}, \frac{i+1}{m}\right]}
\|x^\ve_m(\cdot,t)-x^\ve_m\left(\cdot,\frac{i}{m}\right)\|^2
_{{L}_2(\mu)}\bigg)^n\leq
$$
$$
\leq E\ve^n\bigg( \int_{\mbR}  \sup_{t\in\left[\frac{i}{m},
\frac{i+1}{m}\right]} \bigg(\int^t_{\frac{i}{m}}\int_{\mbR}
\vf\bigg(x^\ve_m\bigg(u,\frac{i}{m}\bigg)-p\bigg)W(dp,
ds)\bigg)^2\mu(du) \bigg)^n\leq
$$
$$
\leq \ve^n \int_{\mbR} E \sup_{t\in\left[\frac{i}{m},
\frac{i+1}{m}\right]} \bigg(\int^t_{\frac{i}{m}}\int_{\mbR}
\vf\bigg(x^\ve_m\bigg(u,\frac{i}{m}\bigg)-p\bigg)W(dp,
ds)\bigg)^{2n}\mu(du).
$$
Since $ \int^t_{\frac{i}{m}}\int_{\mbR}
\vf\bigg(x^\ve_m\bigg(u,\frac{i}{m}\bigg)-p\bigg)W(dp, ds) $ is a
continuous martingale, it follows that
[12]
$$
E\bigg( \sup_{t\in\left[\frac{i}{m}, \frac{i+1}{m}\right]}
\|x^\ve_m(\cdot,t)-x^\ve_m\left(\cdot,\frac{i}{m}\right)\|^2
_{{L}_2(\mu)}\bigg)^n\leq
$$
$$
\leq \ve^n\int_{\mbR}\bigg(\frac{2n}{2n-1}\bigg)^{2n} E\bigg(
\int^{\frac{i+1}{m}}_{\frac{i}{m}}\int_{\mbR}
\vf\bigg(x^\ve_m\bigg(u,\frac{i}{m}\bigg)-p\bigg)W(dp,
ds)\bigg)^{2n}\mu(du)=
$$
$$
=\bigg(\frac{\ve}{m}\bigg)^n
\bigg(\frac{2n}{2n-1}\bigg)^{2n}
(2n-1)!!
$$
Consequently for $\alpha<1/2$  the constant $C$   can be chosen to be
$$
C= \sum\limits^\infty_{n=0} \frac{\alpha^n}{n!}
\bigg(\frac{2n}{2n-1}\bigg)^{2n} (2n-1)!!.
$$
The lemma is proved.
\end{proof}

The following lemma shows that $\{x^\ve_m\}$  is exponentially good
approximation of $\{x^\ve\}.$
\begin{lem}
\label{lem:2.4}
For any $\delta>0,$
$$
\lim_{m\to\infty}\mathop{\varlimsup}\limits_{\ve\to0}\ve\log
P\{\|x^\ve-x^\ve_m\|>\delta\}=-\infty.
$$
\end{lem}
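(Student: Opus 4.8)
The plan is to estimate $D^\ve_m:=x^\ve-x^\ve_m$ directly by means of Lemma \ref{lem:2.2}, using Lemma \ref{lem:2.3} to control the error produced by freezing the coefficients. Subtracting \eqref{eq:2.4} from \eqref{eq:2.1} one sees that $D^\ve_m$ is of the form considered just before Lemma \ref{lem:2.2}:
$$
dD^\ve_m(u,t)=\sqrt{\ve}\int_{\mbR}\psi_m(t,u,p)W(dp,dt),\qquad D^\ve_m(u,0)=0,
$$
with $\psi_m(t,u,p)=\vf(x^\ve(u,t)-p)-\vf\big(x^\ve_m(u,\frac{[tm]}{m})-p\big)$; progressive measurability of $\psi_m$ and the fact that $D^\ve_m(\cdot,t)\in L_2(\mbR,\mu)$ follow from \eqref{eq:2.2}, its analogue for $x^\ve_m$, and the rapid decay of $\vf$. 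Hence $\|D^\ve_m(\cdot,t)\|^2_{L_2(\mu)}$ has the It\^o differential recorded there, and everything reduces to verifying the linear-growth hypothesis of Lemma \ref{lem:2.2} on a well-chosen stochastic interval.

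The elementary ingredient is that, since $\vf\in S$, for all $a,b\in\mbR$
$$
\int_{\mbR}\big(\vf(a-p)-\vf(b-p)\big)^2dp\le\|\vf'\|^2_{L_2(\mbR)}(a-b)^2 ,
$$
which follows from $\vf(a-p)-\vf(b-p)=\int_b^a\vf'(s-p)\,ds$ and Minkowski's integral inequality. I would then split
$$
\psi_m(t,u,p)=\big(\vf(x^\ve(u,t)-p)-\vf(x^\ve_m(u,t)-p)\big)+\big(\vf(x^\ve_m(u,t)-p)-\vf(x^\ve_m(u,\tfrac{[tm]}{m})-p)\big)
$$
and integrate in $dp\,\mu(du)$ to obtain
$$
\int_{\mbR}\int_{\mbR}\psi_m^2(t,u,p)\mu(du)dp\le 2\|\vf'\|^2_{L_2(\mbR)}\Big(\|D^\ve_m(\cdot,t)\|^2_{L_2(\mu)}+\big\|x^\ve_m(\cdot,t)-x^\ve_m(\cdot,\tfrac{[tm]}{m})\big\|^2_{L_2(\mu)}\Big).
$$

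Next I would fix a sequence $\rho_m\downarrow 0$ with $m\rho_m^2\to\infty$ (for instance $\rho_m^2=m^{-1/2}$) and introduce the $\{\cF_t\}$-stopping time
$$
\sigma_m=\inf\Big\{t\in[0,1]:\ \big\|x^\ve_m(\cdot,t)-x^\ve_m(\cdot,\tfrac{[tm]}{m})\big\|^2_{L_2(\mu)}\ge\rho_m^2\Big\}\wedge1 .
$$
On $[0,\sigma_m]$ the preceding display makes the hypothesis of Lemma \ref{lem:2.2} hold with $L=2\|\vf'\|^2_{L_2(\mbR)}$, $\rho=\rho_m$ and $\alpha\equiv0$, so that
$$
\ve\log P\Big\{\sup_{t\in[0,\sigma_m]}\|D^\ve_m(\cdot,t)\|_{L_2(\mu)}\ge\delta\Big\}\le 2\|\vf'\|^2_{L_2(\mbR)}+\log\frac{\rho_m^2}{\rho_m^2+\delta^2},\qquad\ve\le1 .
$$
For $P\{\sigma_m<1\}$, since $\frac{[tm]}{m}=\frac{i}{m}$ on $[\frac{i}{m},\frac{i+1}{m})$, a union bound over $i=0,\dots,m-1$, the exponential Chebyshev inequality and Lemma \ref{lem:2.3} (with $\alpha=\frac14$) give $P\{\sigma_m<1\}\le mCe^{-m\rho_m^2/(4\ve)}$, hence $\ve\log P\{\sigma_m<1\}\le\ve\log(mC)-\frac14 m\rho_m^2$. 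Since $\{\|x^\ve-x^\ve_m\|>\delta\}\subset\{\sigma_m<1\}\cup\{\sup_{t\in[0,\sigma_m]}\|D^\ve_m(\cdot,t)\|_{L_2(\mu)}>\delta\}$, combining the two bounds yields
$$
\varlimsup_{\ve\to0}\ve\log P\{\|x^\ve-x^\ve_m\|>\delta\}\le\max\Big(2\|\vf'\|^2_{L_2(\mbR)}+\log\frac{\rho_m^2}{\rho_m^2+\delta^2},\ -\tfrac14 m\rho_m^2\Big),
$$
and letting $m\to\infty$ both terms go to $-\infty$, which is the assertion.

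The only non-routine point is the balance in this last step: Lemma \ref{lem:2.2} forces $\rho_m$ to be small, so that $\log\frac{\rho_m^2}{\rho_m^2+\delta^2}\to-\infty$, while the freezing-error estimate forces $m\rho_m^2$ to grow fast enough to beat the factor $\ve^{-1}$ in its exponent; a sequence such as $\rho_m^2=m^{-1/2}$ does both. A minor technical point to watch is that $D^\ve_m$ genuinely fits the framework preceding Lemma \ref{lem:2.2} (integrability of $\psi_m$ against $\mu$ and $D^\ve_m(\cdot,t)\in L_2(\mbR,\mu)$), which is exactly where \eqref{eq:2.2} and the decay of $\vf$ enter.
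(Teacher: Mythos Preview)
Your proof is correct and follows essentially the same route as the paper: apply Lemma~\ref{lem:2.2} to $z^\ve=x^\ve-x^\ve_m$ on the stochastic interval where the freezing error $\|x^\ve_m(\cdot,t)-x^\ve_m(\cdot,[tm]/m)\|_{L_2(\mu)}$ stays below a threshold, then control the complementary event by Lemma~\ref{lem:2.3} and a union bound. The only cosmetic difference is that you let the threshold $\rho_m$ depend on $m$ from the outset (with $\rho_m\to0$, $m\rho_m^2\to\infty$), whereas the paper keeps $\rho$ fixed and takes the limit $\rho\to0$ after $m\to\infty$; your packaging is arguably cleaner and also makes the use of Lemma~\ref{lem:2.3} (with an explicit $\alpha<1/2$) more precise than the paper's slightly informal exponent.
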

\begin{proof}
Fix $\delta>0.$  For any $\rho>0,$ define the stopping time
$$
\tau=\inf\bigg\{ t \ | \ \|x^\ve_m(
\cdot,t)-x^\ve_m\left(\cdot,\frac{[mt]}{m}\right)\|_
{{L}_2(\mu)}\geq\rho\bigg\}\wedge1.
$$
Let $z^\ve(u,t)=x^\ve(u,t)-x^\ve_m(u,t).$  Due to  Lemma
\ref{lem:2.2} for any $\delta>0$  and any $\ve\leq1,$
$$
\ve\log
 P\{\sup_{t\in[0,\tau]}
\|x^\ve(\cdot,t)-x^\ve_m(\cdot,t)\|_{{L}_2(\mu)}>\delta \}\leq
{L}+\log\frac{\rho^2}{\rho^2+\delta^2},
$$
where ${L}$ is  independent of $\ve, \delta, \rho$  and $m.$  Hence,
$$
\lim_{\rho\to0}\sup_{m\geq1}\mathop{\varlimsup}\limits_{\ve\to0}\ve\log
 P
\{\sup_{t\in[0,\tau]} \|x^\ve(
\cdot,t)-x^\ve_m(\cdot,t)\|_{{L}_2(\mu)}>\delta \}=-\infty.
$$
Now, since
$$
\{\|x^\ve-x^\ve_m\|>\delta\}=
$$
$$
= \{\sup_{t\in[0,\tau]} \|x^\ve(\cdot,t)-x^\ve_m(
\cdot,t)\|_{{L}_2(\mu)}>\delta\}\cup \{\sup_{t\in(\tau,1]}
\|x^\ve(\cdot,t)-x^\ve_m(\cdot,t)\|_{{L}_2(\mu)}>\delta\}\subset
$$
$$
\subset \{\sup_{t\in[0,\tau]} \|x^\ve(\cdot,t)-x^\ve_m(
\cdot,t)\|_{{L}_2(\mu)}>\delta\}\cup \{\tau<1\},
$$
the lemma will be proved as soon as we show that for all
$\rho>0,$
$$
\lim_{m\to\infty}\mathop{\varlimsup}\limits_{\ve\to0}\ve\log
 P
\{\sup_{t\in[0;1]}
\|x^\ve_m(\cdot,t)-x^\ve_m(\cdot,\frac{[mt]}{m})\|_{{L}_2(\mu)}
\geq\rho \}=-\infty.
$$
It follows from Chebychev's inequality and Lemma \ref{lem:2.3} that
$$
 P
\{\sup_{t\in[0;1]}
\|x^\ve_m(\cdot,t)-x^\ve_m(\cdot,\frac{[mt]}{m})\|_{{L}_2(\mu)}
\geq\rho \}\leq
$$
$$
\leq m\max_{0\leq i\leq m-1} P\{ \sup_{t\in\left[\frac{i}{m},
\frac{i+1}{m}\right]} \frac{m}{\ve}\|x^\ve_m(
\cdot,t)-x^\ve_m\left(\cdot,\frac{i}{m}\right)\|^2 _{{L}_2(\mu)}
\geq\frac{m\rho^2}{\ve}\}\leq mCe^{-\frac{m\rho^2}{\ve}}.
$$
The lemma is proved.
\end{proof}

Lemma \ref{lem:2.1}  and Lemma \ref{lem:2.4}  give us possibility to
receive the LDP for $\{x^\ve\}$   using the following theorem from
[9].
\begin{thm}
\label{thm:2.2} Let $(\mfX, \rho), \ (\cY, \sigma)$  be Polish
spaces, $\{y^\ve\}$  satisfies the LDP with  rate function $I,$
$G_m: \cY\to\mfX, m\geq1,$  are continuous functions. Assume that
there exists  $G: \cY\to\mfX$   such that for every $\alpha<\infty,$
$$
\lim_{m\to\infty} \sup_{\{y: I(y)\leq\alpha\}} \rho(G_m(y), G(y))=0.
$$
Then any family $\{x^\ve\}$   for which $\{G_m(y^\ve)\}$   is
exponentially good approximation satisfies the LDP in $\mfX$  with
rate function $$I'(x)=\inf\{I(y): x=G(y)\}.$$
\end{thm}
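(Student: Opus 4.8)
The plan is to combine the classical contraction principle with the theory of exponentially good approximations, so that the argument amounts to reproducing the proof technique of [9] in the present setting.

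First I would apply the contraction principle to each continuous map $G_m$: since $\{y^\ve\}$ satisfies the LDP in $\cY$ with the good rate function $I$, for every fixed $m$ the family $\{G_m(y^\ve)\}$ satisfies the LDP in $\mfX$ with the good rate function $J_m(x)=\inf\{I(y):G_m(y)=x\}$, where $\inf\emptyset:=+\infty$. Then I would set $I'(x)=\inf\{I(y):G(y)=x\}$ and show that it is the limit of the $J_m$ in the sense needed to run the approximation theorem, namely $I'(x)=\sup_{\delta>0}\liminf_{m\to\infty}\inf_{\rho(z,x)<\delta}J_m(z)$ for every $x$, and moreover that $I'$ is a good rate function. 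The easy half is: given $\eta>0$ and $y$ with $G(y)=x$ and $I(y)\le I'(x)+\eta$, uniform convergence $G_m\to G$ on the compact level set $\{I\le I'(x)+\eta\}$ forces $G_m(y)\to x$, so $\inf_{\rho(z,x)<\delta}J_m(z)\le J_m(G_m(y))\le I(y)\le I'(x)+\eta$ for all large $m$ and all $\delta>0$; letting $m\to\infty$, then $\delta\to0$, then $\eta\to0$ shows $\sup_{\delta>0}\limsup_{m}\inf_{\rho(z,x)<\delta}J_m(z)\le I'(x)$.

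The substantial half, which I expect to be the main obstacle, is the reverse bound $I'(x)\le\sup_{\delta>0}\liminf_{m}\inf_{\rho(z,x)<\delta}J_m(z)$. I would fix $\alpha$ larger than the right-hand side and, by a diagonal choice of $m_k\to\infty$ as $\delta_k\downarrow0$, extract points $y_k$ with $\rho(G_{m_k}(y_k),x)<\delta_k$ and $I(y_k)\le\alpha$; goodness of $I$ makes $\{I\le\alpha\}$ compact, so along a subsequence $y_k\to y^*$ with $I(y^*)\le\alpha$ by lower semicontinuity. The key point is that $G$ is continuous on $\{I\le\alpha\}$ — for free, since there it is a uniform limit of the continuous $G_m$ — so in the triangle estimate $\rho(G(y^*),x)\le\rho(G(y^*),G(y_k))+\rho(G(y_k),G_{m_k}(y_k))+\rho(G_{m_k}(y_k),x)$ the first term vanishes by this continuity, the second by uniform convergence on $\{I\le\alpha\}$, the third by construction; hence $G(y^*)=x$ and $I'(x)\le I(y^*)\le\alpha$. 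Letting $\alpha$ decrease gives the bound, so $I'$ is the claimed limit. Goodness of $I'$ is then immediate: $\{I'\le\alpha\}$ is closed (lower semicontinuity) and contained in $G(\{I\le\alpha+1\})$, the image of a compact set under $G|_{\{I\le\alpha+1\}}$.

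Finally I would assemble everything through the exponentially good approximation theorem of [9]. By hypothesis $\{G_m(y^\ve)\}$ is an exponentially good approximation of $\{x^\ve\}$; each $\{G_m(y^\ve)\}$ satisfies the LDP with the good rate function $J_m$; and $J_m\to I'$ with $I'$ good. The remaining condition needed to pass from the weak to the full LDP — that $\inf_F I'\le\limsup_m\inf\{I(y):G_m(y)\in F\}$ for every closed $F\subset\mfX$ — is verified by exactly the compactness extraction used above: take near-minimizers $y_m$ with $G_m(y_m)\in F$ and $I(y_m)$ bounded, pass to a subsequential limit $y^*$ in a compact level set of $I$, and note $G(y^*)\in F$ because $F$ is closed and $G_m(y_m)\to G(y^*)$. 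This yields the LDP for $\{x^\ve\}$ in $\mfX$ with rate function $I'(x)=\inf\{I(y):x=G(y)\}$; apart from the rate-function identification, everything is bookkeeping once the cited theorem is invoked.
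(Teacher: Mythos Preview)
The paper does not prove Theorem~\ref{thm:2.2} at all: it is quoted verbatim as a known result from Dembo--Zeitouni~[9] (this is the extended contraction principle, Theorem~4.2.23 there), and the paper only \emph{applies} it in the proof of Theorem~\ref{thm:2.3}. Your proposal, by contrast, reproduces the actual proof from~[9], and does so correctly: you use the contraction principle for each $G_m$, identify $I'$ as the variational limit of the $J_m$ via compactness of the level sets $\{I\le\alpha\}$ and the fact that $G$ is automatically continuous on those sets as a uniform limit of continuous maps, and then feed everything into the exponentially good approximation theorem. One small caveat: your argument relies throughout on $I$ being a \emph{good} rate function (compact level sets), which the paper's statement omits but which is part of the hypothesis in~[9] and is satisfied in the application since the Gaussian rate function of Theorem~\ref{thm:2.1} is good; you should make that assumption explicit.
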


 Define
$$
H_1=\{h\in C([0;1]; {L}_2(\mu)): h(u,t)=
$$
$$
= u+\int^t_0\int_{\mbR}\vf(h(u,s)-p)a(p,s)dpds,
a\in{L}_2(\mbR\times[0;1]\}.
$$
\begin{thm}
\label{thm:2.3}
The family $\{x^\ve\}$  satisfies the LDP in $\mfX$  with rate
function
$$
I(h)=\begin{cases} \frac{1}{4\pi}\int^1_0\int_{\mbR} \bigg( \frac{
\cF(\dot{h}(h^{-1}( \cdot,s),s))(\lambda) }{\cF(\vf)(\lambda)}
\bigg)^2d\lambda ds, h\in H_1,\\
\infty, \ h\notin H_1.
\end{cases}
$$
\end{thm}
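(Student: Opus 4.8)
The plan is to apply the extended contraction principle, Theorem~\ref{thm:2.2}, to the driving family $\{y^\ve\}$ from \eqref{2.9'}. By the Remark following Corollary~\ref{corl:2.1}, applied with $\psi_2$ in place of $\vf$ (note $\cF(\psi_2)\ne0$ a.e. by {\bf H1} and the running hypothesis $\cF(\vf)\ne0$ a.e.), the family $\{y^\ve\}$ satisfies the LDP in $\cY$ with a good rate function $I_y$ which, by Plancherel's identity, equals $\tfrac12\|a\|^2_{L_2(\mbR\times[0;1])}$ when $\dot g(\cdot,s)=\psi_2*a(\cdot,s)$ for some $a\in L_2(\mbR\times[0;1])$ and equals $\infty$ otherwise; in particular $\{g:I_y(g)\le\alpha\}$ consists of the curves whose density satisfies $\int_0^1\|a(\cdot,s)\|^2_{L_2(\mbR)}\,ds\le2\alpha$. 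By Corollary~\ref{corl:2.2} and the iterated representation preceding it there are continuous maps $G_m\colon\cY\to\mfX$ with $x^\ve_m=G_m(y^\ve)$; unwinding \eqref{eq:2.8} and using $\psi_1*\psi_2=\vf$, the value $G_m(g)$ is the piecewise (``frozen coefficient'') curve $h$ with $h(u,0)=u$ and $\dot h(u,t)=(\vf*a(\cdot,t))\big(h(u,[mt]/m)\big)$. I would then define $G\colon\cY\to\mfX$ by letting $G(g)$, for $g$ in the effective domain of $I_y$, be the solution of
$$
h(u,t)=u+\int_0^t\int_{\mbR}\vf(h(u,s)-p)a(p,s)\,dp\,ds,\qquad u\in\mbR,\ t\in[0;1],
$$
and $G(g)=e$ otherwise; here existence and uniqueness, together with $h\in\mfX$ and the fact that each $h(\cdot,s)$ is an increasing homeomorphism of $\mbR$, follow from a routine Gronwall argument using $\vf\in S$, and one checks $G(g)\in H_1$ on the effective domain. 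By Lemma~\ref{lem:2.4} the family $\{x^\ve_m\}=\{G_m(y^\ve)\}$ is an exponentially good approximation of $\{x^\ve\}$.

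The step I expect to be the main obstacle is verifying the remaining hypothesis of Theorem~\ref{thm:2.2}: for every $\alpha<\infty$, $\sup_{\{g:I_y(g)\le\alpha\}}\|G_m(g)-G(g)\|\to0$ as $m\to\infty$. I would first prove uniform equicontinuity in time of the skeletons on a level set: for $h=G(g)$ with $I_y(g)\le\alpha$, Cauchy--Schwarz gives $\|\vf*a(\cdot,s)\|_{L_\infty}\le\|\vf\|_{L_2}\|a(\cdot,s)\|_{L_2}$, hence
$$
\|h(\cdot,s)-h(\cdot,s')\|_{L_2(\mu)}\le\|h(\cdot,s)-h(\cdot,s')\|_{L_\infty}\le\int_{s'}^s\|\vf*a(\cdot,\sigma)\|_{L_\infty}\,d\sigma\le\|\vf\|_{L_2}\sqrt{2\alpha}\,|s-s'|^{1/2},
$$
a modulus independent of $g$ on the level set, so that $\omega_m:=\sup_{\{g:I_y(g)\le\alpha\}}\sup_t\|h(\cdot,t)-h(\cdot,[mt]/m)\|_{L_2(\mu)}\le\|\vf\|_{L_2}\sqrt{2\alpha/m}\to0$. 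Next, comparing the integral equations for $h=G(g)$ and $h_m=G_m(g)$, using $\|(\vf*a(\cdot,s))'\|_{L_\infty}=\|\vf'*a(\cdot,s)\|_{L_\infty}\le\|\vf'\|_{L_2}\|a(\cdot,s)\|_{L_2}$ and the splitting $|h(u,s)-h_m(u,[ms]/m)|\le|h(u,s)-h(u,[ms]/m)|+|h(u,[ms]/m)-h_m(u,[ms]/m)|$, I would obtain, with $\Delta_m^*(t)=\sup_{\sigma\le t}\|h(\cdot,\sigma)-h_m(\cdot,\sigma)\|_{L_2(\mu)}$,
$$
\Delta_m^*(t)\le\|\vf'\|_{L_2}\int_0^t\|a(\cdot,s)\|_{L_2}\big(\omega_m+\Delta_m^*(s)\big)\,ds,
$$
and Gronwall's lemma then gives $\|G_m(g)-G(g)\|=\Delta_m^*(1)\le\|\vf'\|_{L_2}\sqrt{2\alpha}\,\omega_m\exp(\|\vf'\|_{L_2}\sqrt{2\alpha})\to0$, uniformly over $\{g:I_y(g)\le\alpha\}$. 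The delicate point is that every constant must be expressed through $\alpha$ (via $\int_0^1\|a(\cdot,s)\|^2_{L_2}\,ds\le2\alpha$) and through $\vf$, never through the individual $g$.

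It then follows from Theorem~\ref{thm:2.2} that $\{x^\ve\}$ satisfies the LDP in $\mfX$ with rate function $I(h)=\inf\{I_y(g):G(g)=h\}$, and it remains to identify this with the claimed expression. If $h\notin H_1$ the infimum is over the empty set, so $I(h)=\infty$. If $h\in H_1$, write $\dot h(u,t)=\int_{\mbR}\vf(h(u,t)-p)a_0(p,t)\,dp$; substituting $v=h(u,t)$, i.e. $u=h^{-1}(v,t)$, gives $\dot h(h^{-1}(v,t),t)=(\vf*a_0(\cdot,t))(v)$, so after the Fourier transform $\cF(a_0(\cdot,t))=\cF(\dot h(h^{-1}(\cdot,t),t))/\cF(\vf)$. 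Since $\cF(\vf)\ne0$ a.e., this determines $a_0$ — hence the unique $g$ in the effective domain with $G(g)=h$ — uniquely, and shows $a_0\in L_2(\mbR\times[0;1])$; Plancherel's identity then turns $\tfrac12\|a_0\|^2_{L_2(\mbR\times[0;1])}$ into exactly
$$
\frac{1}{4\pi}\int_0^1\int_{\mbR}\bigg(\frac{\cF(\dot h(h^{-1}(\cdot,s),s))(\lambda)}{\cF(\vf)(\lambda)}\bigg)^2\,d\lambda\,ds,
$$
completing the identification of the rate function. This last step is a routine Fourier computation.
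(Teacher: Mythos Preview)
Your proposal is correct and follows the same strategy as the paper: invoke Theorem~\ref{thm:2.2} with the driving family $\{y^\ve\}$, use Corollary~\ref{corl:2.2} for continuity of $G_m$, Lemma~\ref{lem:2.4} for the exponential approximation, and a Cauchy--Schwarz/Gronwall argument on level sets $\{I_y\le\alpha\}$ for the uniform convergence $G_m\to G$. The only cosmetic differences are that the paper bounds the time modulus of the \emph{approximation} $G_m(g)$ directly in $L_2(\mu)$ (getting $\|h(\cdot,t)-h(\cdot,[tm]/m)\|_{L_2(\mu)}^2\le 2\alpha/m$) and then runs Gronwall on squared $L_2(\mu)$ norms, whereas you bound the modulus of the \emph{limit} $G(g)$ in $L_\infty$ and run a first-power Gronwall via Minkowski; both routes yield the same uniform bound $O(m^{-1/2}e^{C\sqrt\alpha})$. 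Your final paragraph identifying the rate function via the substitution $v=h(u,t)$ and Plancherel is in fact more explicit than the paper, which simply states the resulting $I$.
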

\begin{proof}
Define the map $G_m$   as $h=G_m(g), \ g\in\cY,$  where
$$
h(u,t)=h\left(u,\frac{k}{m}\right)+\int_{\mbR} \psi_1
\left(h\left(u,\frac{k}{m}\right)-q\right)
\left[g(q,t)-g\left(q,\frac{k}{m}\right)\right]dq,
$$
$$
t\in
\left[
\frac{k}{m},
\frac{k+1}{m}\right],
k=0,1,\ldots, m-1,
$$
$$
  h(0,u)=u.
$$
Now, observe that $G_m$  is continuous by Lemma \ref{lem:2.1}  and that
$x^\ve_m=G_m(y^\ve)$ where $y^\ve$ was defined in \eqref{2.9'}.  It is enough
to define $G$ on
\newline $ H_2=\{h\in C([0;1];
L_2(\mu)):$
$ h(u,t)=u+\int^t_0\int_{\mbR}\psi_2(u-p)a(p,s)dpds\}.
$

 Let $f=G(g)$ be the unique solution of the integral equation
$$
f(t,u)=u+\int^t_0\int_{\mbR}\psi_1(f(u,s)-q)\dot{g}(q,s)dqds.
$$
In view of Lemma \ref{lem:2.4}   the proof of the theorem is
completed by combining Theorem \ref{thm:2.1}  and Theorem
\ref{thm:2.2}, as soon as we show that for every $\alpha<\infty,$
\begin{equation}
\label{eq:29}
\lim_{m\to\infty}
\sup_{\{g|I_y(g)\leq\alpha\}} \|G_m(g)-G(g)\|=0.
\end{equation}
To this end, fix $\alpha<\infty$  and $g\in H_2$   such that
$I_y(g)\leq\alpha.$  For $g\in H_2,$   there exists
$a\in{L}_2(\mbR\times[0, 1])$  such that
$$
\dot{g}(u,t)=\int_{\mbR}\psi_2(u-p)a(p,t)dp.
$$
Since $\vf=\psi_1*\psi_2$ and $\|\vf\|_{{L}_2(\mbR)}=1,$  it follows
from the Cauchy--Schwarz inequality that for all $t\in[0, 1]$
$$
\bigg\|h(\cdot,t)-h\left(\cdot,\frac{[tm]}{m}\right)\bigg\|^2_{{L}_2(\mu)}=
$$
$$
= \int_{\mbR} \bigg( \int^t_{\frac{[tm]}{m}} \int_{\mbR}
\vf\left(h\left(u,\frac{[tm]}{m}\right)-p\right) a(p,s)dpds
\bigg)^2\mu(du)\leq
$$
$$
\leq \frac{1}{m} \int_{\mbR} \int^t_{\frac{[tm]}{m}} \int_{\mbR}
\vf^2\left(h\left(u,\frac{[tm]}{m}\right)-p\right)dp
\int_{\mbR}a^2(p,s)dpds\mu(du)=
$$
$$
=\frac{1}{m} \int^t_{\frac{[tm]}{m}} \int_{\mbR}
a^2(p,s)dpds\leq\frac{2\alpha}{m}.
$$
The Cauchy--Schwarz inequality  and the
 Lipschitz
continuity of $\vf$ imply  that for all $t\in[0;1]$ we have
$$
\|h(\cdot,t)-f(\cdot,t)\|^2_{{L}_2(\mu)}=
$$
$$
= \int_{\mbR} \bigg( \int^t_0 \int_{\mbR} \left[
\vf\left(h\left(u,\frac{[sm]}{m}\right)-p\right) -
\vf(f(u,s)-p)\right] a(p,s)dpds\bigg)^2\mu(du)\leq
$$
$$
\leq t{L}\int^t_0\int_{\mbR}a^2(p,s)dp
\|f(\cdot,s)-h\left(\cdot,\frac{[sm]}{m}\right)\|^2_{{L}_2(\mu)}ds\leq
$$
$$
\leq t{L}\int^t_0\int_{\mbR} a^2(p,s)dp \left[\|f(\cdot,s)-h(
\cdot,s)\|^2_{{L}_2(\mu)}+\frac{2\alpha}{m}\right]ds \leq
$$
$$
\leq \frac{4{L}\alpha^2}{m}+{L}\int^t_0\int_{\mbR} a^2(p,s)dp
\|f(\cdot,s)-h(\cdot,s)\|^2_{{L}_2(\mu)}ds.
$$
Hence, by Gronwall's lemma,
$$
\|f(\cdot,t)-h(\cdot,t)\|^2_{{L}_2(\mu)}\leq \frac{4{L}\alpha^2}{m}
e^{{L}\int^t_0\int_{\mbR}a^2(p,s)dpds}\leq \frac{4{L}\alpha^2}{m}
e^{2{L}\alpha},
$$
which establishes \eqref{eq:29}  and proves the theorem.
\end{proof}

{\it Remark}. Note that the distribution of $x^\ve$   can be
obtained not only by the phase scaling but  by time changing also.
Namely, denote $\wt{x}^\ve(u,t)=x(u, \ve t),$ $u\in\mbR, t\in[0;1],$
where $x$  is defined by (1.1).

\begin{lem}
\label{lem2.5}
 $x^\ve$  is equal in distribution to $\wt{x}^\ve.$
\end{lem}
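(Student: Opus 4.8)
The plan is to exploit the self-similarity (scaling) of the Wiener sheet together with pathwise uniqueness for \eqref{eq:2.1}. First I would introduce the rescaled noise $\wt W$ on $\mbR\times[0;1]$ by
$$
\wt W(B\times[0;r]) := \ve^{-1/2}\, W(B\times[0;\ve r]), \quad B\subset\mbR \ \mbox{Borel}, \ r\in[0;1].
$$
A direct computation of covariances shows that $\wt W$ is again a Gaussian random measure with independent values on disjoint sets and the Lebesgue measure as control measure; hence $\wt W$ has the same distribution as $W$ on $\mbR\times[0;1]$. Moreover $\wt W$ is adapted to the time-changed filtration $\wt\cF_r := \cF_{\ve r}$.

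Next I would establish the elementary scaling formula for stochastic integrals: for every $f$ adapted to $\{\cF_s\}$ with $f\in L_2(\mbR\times[0;\ve])$ a.s.,
$$
\int_0^{\ve t}\int_\mbR f(p,s)\,W(dp,ds) = \sqrt{\ve}\int_0^t\int_\mbR f(p,\ve r)\,\wt W(dp,dr), \quad t\in[0;1],
$$
as continuous processes; for simple integrands this is exactly the definition of $\wt W$, and the general case follows by the usual $L_2$-isometry and approximation. Applying this with $f(p,s)=\vf(x(u,s)-p)$ and writing $\wt x^\ve(u,t)=x(u,\ve t)$, formula \eqref{eq:1.1} gives
$$
\wt x^\ve(u,t) = u + \int_0^{\ve t}\int_\mbR \vf(x(u,s)-p)\,W(dp,ds) = u + \sqrt{\ve}\int_0^t\int_\mbR \vf(\wt x^\ve(u,r)-p)\,\wt W(dp,dr).
$$
Thus $\wt x^\ve$ solves equation \eqref{eq:2.1} with the driving noise $\wt W$ (and filtration $\wt\cF$) in place of $W$.

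Finally, equation \eqref{eq:2.1} has a unique strong solution which depends measurably on the driving sheet, so $x^\ve=\Psi(W)$ and $\wt x^\ve=\Psi(\wt W)$ for one and the same measurable map $\Psi$ into $\mfX$; the growth estimate \eqref{eq:2.2}, which holds for $x$ as well, guarantees that both are genuine $\mfX$-valued random elements. Since $\wt W$ and $W$ have the same law, I conclude that $\wt x^\ve$ is equal in distribution to $x^\ve$.

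The only genuinely delicate point is the rigorous justification of the scaling formula for the stochastic integral, that is, commuting the deterministic time change with the It\^o integral while keeping track of adaptedness with respect to the rescaled filtration, together with the check that the integrand $\vf(x(u,\cdot)-p)$ has enough integrability, uniformly in $u$, for the $\mfX$-valued integrals to be well defined; everything else is bookkeeping.
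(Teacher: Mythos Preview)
Your argument is correct and complete: the Brownian-sheet scaling $\wt W(B\times[0;r])=\ve^{-1/2}W(B\times[0;\ve r])$ preserves the law of the noise, the change-of-variable formula for the stochastic integral yields that $\wt x^\ve$ solves \eqref{eq:2.1} driven by $\wt W$, and pathwise uniqueness for \eqref{eq:2.1} (granted in the paper via the smoothness of $\vf$) lets you write both flows as the same measurable functional of their respective noises. The integrability concern you flag is harmless here, since $\int_\mbR\vf^2(x(u,s)-p)\,dp=1$ deterministically.

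This is, however, a genuinely different route from the paper's own proof. The paper does not touch the noise at all: it fixes $u_1,\dots,u_n$, observes that the $n$-point motions of $x^\ve$ and of $\wt x^\ve$ are both diffusions in $\mbR^n$ with zero drift, computes their diffusion matrices directly from the quadratic covariations, finds that both equal $a_{ij}(\bar u)=\ve\int_\mbR\vf(u_i-p)\vf(u_j-p)\,dp$, and concludes by uniqueness in law for the associated martingale problem; equality of all $n$-point motions then gives equality of the flows in distribution. Your approach is shorter and delivers the identity in law in $\mfX$ in one stroke, at the price of invoking strong uniqueness/Yamada--Watanabe for the infinite-dimensional equation. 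The paper's approach is more elementary in that it only uses the identification of a finite-dimensional diffusion by its drift and diffusion coefficients, and it makes explicit why the result is really about the covariance structure $\Phi$ rather than the particular representation through $W$; on the other hand it needs the extra step of passing from finite-dimensional marginals to the law in $\mfX$.
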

\begin{proof}
Consider the $n$-point motions of both flows:
$(x^\ve(u_1,\cdot), x^\ve(u_2,\cdot), \ldots, $
\newline$x^\ve(u_n,\cdot))$
and
$(\wt{x}^\ve(u_1,\cdot), \wt{x}^\ve(u_2,\cdot), \ldots, \wt{x}^\ve(u_n,
\cdot)).$   They are diffusion processes in $\mbR$ with zero drift and diffusion
matrices
$A=(a_{ij})^n_{i,j=1}$  and
$\wt{A}=(\wt{a}_{ij})^n_{i,j=1}.$

For the first process we have
$$
a_{ij}(\ov{u})=
$$
$$
=
\lim_{t\to0}\frac{1}{t}E_{\ov{u}}\ve
\int^t_0\int_{\mbR}
\vf(x^\ve(u_i, s)-p)W(dp,ds)
\int^t_0\int_{\mbR}\vf(x^\ve(u_j, s)-p)W(dp,ds)=
$$
$$
=
\ve
\lim_{t\to0}
\frac{1}{t}
\int^t_0\int_{\mbR}
E_{\ov{u}}
\vf(x^\ve(u_i, s)-p)\vf(x^\ve(u_j, s)-p)dpds=
$$
$$
=
\ve\int_{\mbR}\vf(u_i-p)\vf(u_j-p)dp, \ \ov{u}=(u_1,\ldots, u_n)\in\mbR^n.
$$
For the second one we can write
$$
\wt{a}_{ij}(\ov{u})=
$$
$$
=
\lim_{t\to0}\frac{1}{t}E_{\ov{u}}
\int^{\ve t}_0\int_{\mbR}
\vf(x(u_i, s)-p)W(dp,ds)
\int^{\ve t}_0\int_{\mbR}\vf(x(u_j, s)-p)W(dp,ds)=
$$
$$
=
\ve
\lim_{t\to0}
\frac{1}{t}
\int^t_0\int_{\mbR}
E_{\ov{u}}
\vf(x(u_i, \ve s)-p)\vf(x(u_j, \ve s)-p)dpds=
$$
$$
=
\ve\int_{\mbR}\vf(u_i-p)\vf(u_j-p)dp, \ \ov{u}=(u_1,\ldots, u_n)\in\mbR^n.
$$

Thus, the $n$-point motions of $x^\ve$  and $\wt{x}^\ve$   are equal in
distribution. Therefore $x^\ve$ is equal in distribution to $\wt{x}^\ve.$

\end{proof}

\section{Arratia's flow and related stochastic calculus}
\label{section3}
This section is devoted to Arratia's flow of coalescing Brownian
particles. As it was mentioned in the first section, Arratia's flow can
be considered as limit case of flows with the smooth
correlation. It consists of Brownian particles, which move independently up
to the meeting then stick and move together. In contrast with the smooth
correlation case Arratia's flow can not be described by any stochastic differential
equation with a Gaussian noise. So the LDP for Arratia's flow
should be described using its intrinsic properties. In this section we
present some facts about Arratia's flow, which will be useful later on. Arratia's flow can be defined by different ways
[1, 2, 5].
We will use the following definition.
\begin{defn}
\label{defn:3.1}
Arratia's flow $\{x(u,t); u\in\mbR, t\in[0;1]\}$
is a random field with the properties:

1) for every $u\in\mbR$ \ $\{x(u,t), t\in[0;1]\}$ is a  Wiener
martingale with respect to a common filtration and $x(u,0)=u,$

2)  for every $u_1\leq u_2, \ t\in[0;1]$
$$
x(u_1, t)\leq x(u_2, t),
$$

3) $\lg x(u_1,\cdot), x(u_2,\cdot)\rg_t=(t-\tau),  \ t>\tau, $  \
$\tau=\inf\{s: x(u_1,s)=x(u_2,s)\}.$

\end{defn}

It was proved in
[13],
 that $x$  has a modification, which is a
c\`adl\`ag Markov process in $C([0; 1])$
($u\in\mbR$ now plays the role of time). Further we will consider such
modification.

We will establish the LDP for the family of random flows $\{x^\ve;
\ve>0\},$ which is built from $x$  using  the time-change. Define
for $\ve\in(0;1]$
$$
x^\ve(u,t)=x(u, \ve t), \ u\in\mbR, \ t\in[0; 1].
$$

To describe rate function we need the fundamental fact about
Arratia's flow.
This property can be formulated as follows.

\begin{lem}
\label{lem:3.1} {\rm[1, 2, 14].}
For every interval $[a; b]$  and positive time $t$
the set $\{x(u,t); u\in[a;b]\}$  is finite with probability one.
\end{lem}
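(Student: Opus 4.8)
The plan is to bound the expected number of particles started in $[a;b]$ that have not yet coalesced by time $t$, and then to upgrade this $L_1$ bound to a.s.\ finiteness of the whole range. The one‑dimensional input I would record first: for $u_1<u_2$ the difference $\delta(s):=x(u_2,s)-x(u_1,s)$ is, by Definition \ref{defn:3.1} (parts 1 and 3), a continuous martingale with $\lg\delta\rg_s=2(s\wedge\tau)$, where $\tau=\inf\{s:x(u_1,s)=x(u_2,s)\}$; hence, by the Dambis--Dubins--Schwarz theorem, $\delta$ is a time‑changed Brownian motion started from $u_2-u_1$ and absorbed at $0$ at the instant $\tau$. The reflection principle then gives
\[
P\{x(u_1,t)\ne x(u_2,t)\}=P\{\tau>t\}=P\Bigl\{|Z|<\tfrac{u_2-u_1}{\sqrt{2t}}\Bigr\}\le\frac{u_2-u_1}{\sqrt{\pi t}},
\]
where $Z$ is a standard normal variable.

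Next, fix $[a;b]$ and $t>0$ and consider the dyadic grids $G_j=\{a+k(b-a)2^{-j}:0\le k\le 2^j\}$. Let $N_j$ be the number of consecutive pairs $u_{k-1}<u_k$ in $G_j$ with $x(u_{k-1},t)<x(u_k,t)$. Summing the pointwise estimate above over the $2^j$ intervals yields $EN_j\le 2^j\cdot\frac{(b-a)2^{-j}}{\sqrt{\pi t}}=\frac{b-a}{\sqrt{\pi t}}$ for every $j$. The monotonicity of $u\mapsto x(u,t)$ (Definition \ref{defn:3.1}, part 2) shows that $N_j$ is non‑decreasing in $j$: when an ``active'' grid interval is bisected at least one half stays active, while an interval with equal endpoint values can contain only that one value, so both halves are inactive. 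Thus $N_j\uparrow N_\infty$ and, by monotone convergence, $EN_\infty\le(b-a)/\sqrt{\pi t}<\infty$, so $N_\infty<\infty$ a.s. Since the sorted numbers $x(u_0,t)\le\cdots\le x(u_{2^j},t)$ take exactly $N_j+1$ distinct values and $D:=\bigcup_jG_j$ is dense in $[a;b]$, the countable set $\{x(q,t):q\in D\}$ has exactly $N_\infty+1$ elements and is therefore a.s.\ finite.

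It then remains to show that $\{x(u,t):u\in[a;b]\}$ cannot be strictly larger; I claim that if it were infinite, then already $N_\infty=\infty$, which is excluded. As $u\mapsto x(u,t)$ is monotone, it has left and right limits everywhere and at most countably many jumps. If it has infinitely many jumps $w_1<w_2<\dots$, choose dyadic points $q_i\in D$ with $w_i<q_i<w_{i+1}$; since a jump at $w$ produces a genuine gap $(x(w-,t),x(w+,t))$ in the range (again by monotonicity), the numbers $x(q_i,t)$ are strictly increasing, so $N_\infty=\infty$. If it has only finitely many jumps, then on one of the finitely many complementary closed intervals $[c;d]$ the map $x(\cdot,t)$ is continuous, non‑decreasing, and still takes infinitely many values; hence $x(c,t)<x(d,t)$, the range $x([c;d],t)$ is a nondegenerate interval, and by continuity the images of the dyadic points of $[c;d]$ are dense in it, once more forcing $N_\infty=\infty$. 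This contradiction shows that $\{x(u,t):u\in[a;b]\}$ has at most $N_\infty+1<\infty$ elements a.s., which is the assertion.

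I expect the genuinely delicate point to be this last step — verifying that the countable dyadic skeleton already ``sees'' every jump and every interval of strict increase of $u\mapsto x(u,t)$, so that no additional values can hide between the dyadic points. The coalescence‑time tail bound and the monotone‑convergence argument are routine; the bookkeeping that turns ``$N_\infty<\infty$'' into ``the full range is finite'' is where care is needed, and it is precisely there that the monotonicity property (Definition \ref{defn:3.1}, part 2) does the essential work.
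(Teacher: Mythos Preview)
Your argument is correct. The paper does not actually prove Lemma \ref{lem:3.1}; it only cites it from [1, 2, 14] and indicates that in [14] the result is derived as a corollary of the stronger Theorem \ref{thm:3.1} on the total free-motion time $\Gamma$: if the set $\{x(u,t):u\in[a;b]\}$ contained $N+1$ distinct values, then a partition through $N+1$ corresponding preimage points would satisfy $\sum_k\tau(u_k)\ge 1+Nt$, forcing $N\le(\Gamma-1)/t<\infty$ a.s. Your route is more elementary and self-contained for this particular statement: the hitting-time bound $P\{\tau>t\}\le(u_2-u_1)/\sqrt{\pi t}$ combined with monotone convergence yields the explicit first-moment estimate $E(N_\infty+1)\le 1+(b-a)/\sqrt{\pi t}$ without any appeal to the free-motion-time construction. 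Two small points where your write-up could be tightened: in the ``infinitely many jumps'' case you enumerate the jumps as $w_1<w_2<\dots$, but the jump set of a monotone function can be dense---it is enough to pick any $M$ jumps, order those, and separate them by dyadic points, which already gives $N_\infty\ge M$; and since the paper works with the c\`adl\`ag-in-$u$ modification of $x$, your entire last paragraph can be replaced by the one-line observation that $x(u,t)=\lim_{q\downarrow u,\,q\in D}x(q,t)$ automatically lies in the finite set $\{x(q,t):q\in D\}$.
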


In  [14] this fact was obtained as a consequence of
a more general statement about the finiteness of the  total time of free
motion  of particles in
 Arratia's flow. Consider a partition
$\lambda$  of the interval $[a; b]: a=u_0<\ldots<u_n=b.$  As usual
denote $|\lambda|=\max\limits_{k=0,\ldots, n-1}u_{k+1}-u_k.$   For
$k=1,\ldots, n$  define the random time
$$
\tau(u_k)=\inf\{t: x(u_k, t)=x(u_{k-1}, t)\}\wedge1.
$$
For $k=0$  put $\tau(u_0)=1.$   The following statement was proved in
[14, 15].

\begin{thm}
\label{thm:3.1} {\rm [14, 15].}
There exists a random variable
$$
\Gamma=\sup_\lambda\sum^n_{k=0}\tau(u_k).
$$
Here supremum means, that for arbitrary $\lambda$
\begin{equation}
\label{eq:3.2}
\Gamma\geq\sum^n_{k=0}\tau(u_k)
\end{equation}
and for arbitrary random variable $\zeta$ with property
\eqref{eq:3.2}  the following inequality holds
$$
\Gamma\leq\zeta.
$$
Moreover, $\Gamma$   can be obtained as a limit a.s.
$$
\Gamma=\lim_{|\lambda|\to0}\sum^n_{k=0}\tau(u_k).
$$
\end{thm}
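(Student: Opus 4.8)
The plan is to build everything on one structural fact: the sum $S(\lambda):=\sum_{k=0}^{n}\tau(u_k)$ attached to a partition $\lambda:\ a=u_0<\dots<u_n=b$ is non‑decreasing under refinement of $\lambda$. To check this I would insert one point $v\in(u_{j-1},u_j)$ at a time and observe that this alters only the $j$‑th summand. By the order‑preserving property $x(u_{j-1},t)\le x(v,t)\le x(u_j,t)$, so $x(u_{j-1},t)=x(u_j,t)$ holds exactly when $x(u_{j-1},t)=x(v,t)=x(u_j,t)$; together with coalescence (Definition \ref{defn:3.1}(3) forces each pair of one‑point motions to stay together after its meeting) this gives $\tau(u_j)=\bigl(\sigma_{u_{j-1},v}\vee\sigma_{v,u_j}\bigr)\wedge 1$, where $\sigma_{\cdot,\cdot}$ is the (uncapped) meeting time of the corresponding pair. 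Hence the new contribution $\sigma_{u_{j-1},v}\wedge 1+\sigma_{v,u_j}\wedge 1$ is at least $(\sigma_{u_{j-1},v}\wedge 1)\vee(\sigma_{v,u_j}\wedge 1)=\tau(u_j)$, so $S$ does not decrease. Iterating, $S(\lambda')\ge S(\lambda)$ whenever $\lambda'\supseteq\lambda$, and in particular $S(\lambda_1\cup\lambda_2)\ge S(\lambda_1)\vee S(\lambda_2)$, so $\{S(\lambda)\}_\lambda$ is a family directed upward. I would then take $\Gamma$ to be its essential supremum: such a $\Gamma$ exists, automatically satisfies \eqref{eq:3.2} together with the minimality property claimed, and is the a.s.\ limit of $S$ along some increasing sequence of partitions, which — by the monotonicity just proved — I may enlarge so that its mesh tends to $0$.

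The second step is to compute $E\Gamma$ and deduce finiteness. Using Definition \ref{defn:3.1}, before they meet the motions $x(u_{k-1},\cdot)$ and $x(u_k,\cdot)$ are orthogonal Wiener martingales, so their difference is a Brownian motion run at speed $2$ up to the meeting time; consequently $\tau(u_k)$ has the law of $\sigma_d\wedge 1$ with $d=u_k-u_{k-1}$, where $\sigma_d$ is the first hitting time of $0$, so $E\tau(u_k)=f(u_k-u_{k-1})$ with $f(d)=\int_0^1\operatorname{erf}\bigl(d/(2\sqrt t)\bigr)\,dt$. The only features of $f$ I need are that it is concave, $f(0)=0$, and $f'(0)=2/\sqrt\pi<\infty$. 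Hence $E\,S(\lambda)=1+\sum_k f(u_k-u_{k-1})\le 1+f'(0)(b-a)$ uniformly in $\lambda$; running $\lambda$ through the refining sequence of the first step and applying monotone convergence gives $E\Gamma=1+f'(0)(b-a)<\infty$, so $\Gamma<\infty$ a.s. The two displayed properties of the supremum are then immediate from the definition of $\Gamma$ as an essential supremum.

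The third step is the limit statement $\Gamma=\lim_{|\lambda|\to 0}S(\lambda)$. Here I would use the identity $E(\Gamma-S(\lambda))=f'(0)(b-a)-\sum_k f(u_k-u_{k-1})=\sum_k\rho(u_k-u_{k-1})$ with $\rho(d):=f'(0)d-f(d)$; concavity gives $\rho\ge 0$ and differentiability of $f$ at $0$ gives $\rho(d)=o(d)$ as $d\to 0$. Therefore $0\le E(\Gamma-S(\lambda))\le (b-a)\sup_k\rho(u_k-u_{k-1})/(u_k-u_{k-1})\to 0$ as $|\lambda|\to 0$, so $S(\lambda)\to\Gamma$ in $L_1$, a fortiori in probability, as $|\lambda|\to 0$; and along a sequence of successive refinements with mesh $\to 0$ the convergence is moreover a.s., since there $S(\lambda)\uparrow$ and the a.s.\ limit, having expectation $E\Gamma$ and lying below $\Gamma$, must equal $\Gamma$.

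I expect the main obstacle to be exactly this last step. Upward monotonicity easily produces a candidate $\Gamma$ and one nested family realizing it, but showing the supremum is approached in the \emph{fine‑mesh} limit along \emph{every} partition requires the quantitative input $\rho(d)=f'(0)d-f(d)=o(d)$, i.e.\ the concavity of $f$ and the finiteness of $f'(0)$ — equivalently, the integrability near $t=0$ of the Brownian hitting‑time tail $\operatorname{erf}(d/(2\sqrt t))/d$. Secondary technical points I would write out carefully are: that inserting a point changes only one summand of $S$; the measurability of $\Gamma$ (settled by the explicit increasing sequence of partitions); and the fact that Definition \ref{defn:3.1}(3) really forces consecutive one‑point motions to evolve as independent Brownian motions up to their meeting, which is what makes $E\tau(u_k)$ an explicit function of the gap $u_k-u_{k-1}$.
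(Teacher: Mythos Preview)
The paper does not prove Theorem~\ref{thm:3.1}: it is quoted from [14,\,15], with only the remark that in those references it arises from ``the finiteness of the total time of free motion of particles in Arratia's flow.'' So there is no in-paper argument to compare against, and your proposal has to be judged on its own. It is essentially correct. The monotonicity of $S(\lambda)$ under refinement (via the identity $\tau(u_j)=(\sigma_{u_{j-1},v}\vee\sigma_{v,u_j})\wedge 1$), the construction of $\Gamma$ as the essential supremum of the upward-directed family $\{S(\lambda)\}$, the computation $E\Gamma=1+f'(0)(b-a)=1+\tfrac{2}{\sqrt\pi}(b-a)$ through concavity of $f(d)=\int_0^1\mathrm{erf}\bigl(d/(2\sqrt t)\bigr)\,dt$, and the $L_1$ estimate $E(\Gamma-S(\lambda))=\sum_k\rho(u_k-u_{k-1})\to 0$ with $\rho(d)=f'(0)d-f(d)=o(d)$ are all sound. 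Judging from the brief description the paper gives, these are precisely the ``total time of free motion'' ingredients used in [14,\,15].

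One point to make explicit in the write-up: your argument yields a.s.\ convergence $S(\lambda_n)\uparrow\Gamma$ along any \emph{nested} sequence of partitions with $|\lambda_n|\to 0$, and $L_1$ (hence in probability) convergence along an \emph{arbitrary} such sequence. The theorem's wording ``a limit a.s.'' does not specify which is intended. If a.s.\ convergence along every sequence with $|\lambda|\to 0$ is really claimed, the $L_1$ bound alone does not give it; one would either restrict to refining sequences (which is all the paper ever uses downstream) or strengthen the rate on $E(\Gamma-S(\lambda))$ and run Borel--Cantelli. You already flag this distinction in your last paragraph; just state clearly which version you are proving.
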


Using Theorem \ref{thm:3.1} the following stochastic
integrals were built for a bounded measurable function $\vf$
\begin{equation}
\label{eq:3.3}\
\begin{split}
&
\int^b_a\int^{\tau(u)}_0\vf(x(u,s))ds=P\mbox{-}
\lim_{|\lambda|\to0}\sum^n_{k=0}\int^{\tau(u_k)}_0\vf(x(u,s))ds,\\
&
\int^b_a\int^{\tau(u)}_0\vf(x(u,s))dx(u,s)=L_2\mbox{-}
\lim_{|\lambda|\to0}\sum^n_{k=0}\int^{\tau(u_k)}_0\vf(x(u,s))dx(u,s).
\end{split}
\end{equation}

Note that the left-hand side in \eqref{eq:3.3}  contains two
symbols of integral and only one symbol of differential. It
emphasizes that the second differential can be substituted by
$\tau(u)$   which formally possesses the property
$$
\sum_{u\in[a,b]}\tau(u)<+\infty.
$$
The integrals from \eqref{eq:3.3}  allow to formulate the Girsanov
theorem for Arratia's flow [14]. If we consider the flow $y,$   which
is built analogously to Arratia's flow but instead of a Wiener process
a diffusion process with the drift $\vf$  and variance 1 is used,
then it can be proved [14], that in an appropriate functional space
the distribution $P_y$  of this flow is absolutely continuous with
respect to the distribution $P_x$  of Arratia's flow and
\begin{equation}
\label{eq:3.4}
\frac{dP_y}{dP_x}=\exp
\bigg\{
\int^b_a\int^{\tau(u)}_0\vf(x(u, s))dx(u, s)-
\frac{1}{2}\int^b_a\int^{\tau(u)}_0\vf^2(x(u, s))ds\bigg\}.
\end{equation}
Note that the form   of the derivative  in \eqref{eq:3.4} is very
natural. It consists of the sum of usual terms for Girsanov theorem
along the pieces of trajectories of particles in Arratia's flow up to
the moment of the first meeting. We will prove, that rate
function for $\{x^\ve\}$  is the infinite sum of  rate
functions for the Wiener process $x(u,\cdot)-u, u\in\mbR$   up to the
moment of the meeting. The main result will be proved in two steps.
In the next two sections we will consider the case of finite number of
 particles and the general case will be treated in the last
section.

\section{LDP for stopped Wiener process}
\label{section4}
Here we consider a Wiener process $\vec{w}$  in $\mbR^d$  starting
from a point $\vec{u}.$  Let $B\subset\mbR^d$  be a closed set.
Define the stopping time
$$
\tau=\inf\{t: \vec{w}(t)\in B\}\wedge1.
$$
Consider in the space $C([0;1], \mbR^d)$  the family of random
elements $\{\vec{y}\,^\ve; \ve\in(0; 1]\}$  defined as follows
$$
\vec{y}\,^\ve(t)=\vec{\eta}(\ve t), \ t\in[0; 1],
$$
where $\vec{\eta}(t)=\vec{w}(t\wedge\tau).$  To describe rate
function for the family $\{\vec{y}\,^\ve\}$  let us denote for
$\vec{f}\in C([0; 1], \mbR^d)$
$$
\tau(\vec{f})=\inf\{t: \ \vec{f}(t)\in B\}\wedge1.
$$
Define
$$
\Phi(\vec{f})(t)=\vec{f}(t\wedge\tau(\vec{f})), \ t\in[0; 1].
$$
Denote by $H$  the subset of $C([0; 1], \mbR^d)$
consisting of functions with  square-integrable derivative.
Now define the rate function $I$  on $C([0; 1], \mbR^d)$  as follows
$$
I(\vec{g})=
\begin{cases}
\frac{1}{2}\int^1_0\|\dot{\vec{g}}(t)\|^2dt, \ \vec{g}(0)=\vec{u}, \
\vec{g}\in H\cap \Phi(C([0;1], \mbR^d)),\\
+\infty, \ \vec{g}(0)\ne\vec{u} \ \mbox{or} \ \vec{g}\notin H\cap\Phi(C([0;1], \mbR^d)).
\end{cases}
$$
The following statement is the main result of this section.
\begin{thm}
\label{thm:4.1}
The family $\{\vec{y}\,^\ve\}$   satisfies the LDP in
$C([0;1], \mbR^d)$  with rate function $I.$
\end{thm}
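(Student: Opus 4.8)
The plan is to derive the LDP for $\{\vec{y}\,^\ve\}$ from the classical Schilder theorem for the (unstopped, rescaled) Wiener process by means of the contraction principle applied to the stopping map $\Phi$. Write $\vec{y}\,^\ve(t)=\Phi(\vec{w}\,^\ve)(t)$ where $\vec{w}\,^\ve(t)=\vec{w}(\ve t)$ — this holds because $\vec\eta(\ve t)=\vec w((\ve t)\wedge\tau)=\vec w\bigl(\ve(t\wedge(\tau/\ve))\bigr)$ and $\tau/\ve$ is precisely the hitting time of $B$ by $s\mapsto \vec w(\ve s)$, so that $\vec y\,^\ve=\Phi(\vec w\,^\ve)$ with the same $B$. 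By Schilder's theorem $\{\vec w\,^\ve\}$ satisfies the LDP in $C([0;1],\mbR^d)$ with good rate function $\vec f\mapsto \frac12\int_0^1\|\dot{\vec f}(t)\|^2dt$ for $\vec f\in H$ with $\vec f(0)=\vec u$ (and $+\infty$ otherwise). If $\Phi$ were continuous, the contraction principle would immediately give the LDP for $\{\vec y\,^\ve\}$ with rate function $\vec g\mapsto\inf\{\tfrac12\int_0^1\|\dot{\vec f}\|^2dt : \Phi(\vec f)=\vec g\}$, and it remains to check this infimum equals the stated $I(\vec g)$.

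The first real issue is that $\Phi$ is \emph{not} continuous on all of $C([0;1],\mbR^d)$ — a path grazing the boundary of $B$ can have its hitting time jump under small perturbations. The standard fix is to observe that $\Phi$ is continuous at every path $\vec f$ for which the hitting time $\tau(\vec f)$ is a point of "regular" crossing, and in particular $\Phi$ is continuous on a set containing the effective domain of the rate function in the sense needed by the extended contraction principle (cf. the approach via Theorem~\ref{thm:2.2}, or Lemma~4.2.4-type arguments in large-deviations texts). Concretely, I would show: if $\vec f_n\to\vec f$ uniformly and $\vec f$ satisfies $\tau(\vec f)<1$ with $\vec f$ entering the interior of $B$ immediately after $\tau(\vec f)$ (equivalently, for every $\delta>0$ there is $s\in(\tau(\vec f),\tau(\vec f)+\delta)$ with $\vec f(s)$ in the interior of $B$), then $\tau(\vec f_n)\to\tau(\vec f)$ and hence $\Phi(\vec f_n)\to\Phi(\vec f)$; and separately $\Phi$ is (trivially) continuous at paths with $\tau(\vec f)=1$ that never meet $B$ before time $1$. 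Then one verifies that any $\vec g\in\Phi(C([0;1],\mbR^d))\cap H$ achieving finite $I(\vec g)$ can be written as $\Phi(\vec f)$ for some $\vec f\in H$ with the same energy and lying at a continuity point of $\Phi$ — indeed one may take $\vec f$ to agree with $\vec g$ on $[0,\tau(\vec g)]$ and then proceed linearly (or along any finite-energy path) straight into the interior of $B$; this does not increase energy beyond control and can be made to cost arbitrarily little, so the infimum is attained.

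The computation of the rate function is then straightforward: for $\vec g=\Phi(\vec f)$ one has $\vec g(t)=\vec f(t)$ for $t\le\tau(\vec g)=\tau(\vec f)$ and $\vec g$ constant afterwards, so $\frac12\int_0^1\|\dot{\vec f}\|^2dt\ge\frac12\int_0^{\tau(\vec g)}\|\dot{\vec g}\|^2dt=\frac12\int_0^1\|\dot{\vec g}\|^2dt$, with equality for the choice of $\vec f$ that is constant on $[\tau(\vec g),1]$ — but that $\vec f$ may fail to be a continuity point, so one approximates it by finite-energy paths diving into the interior of $B$, letting the extra energy tend to $0$. Hence $\inf\{\tfrac12\int_0^1\|\dot{\vec f}\|^2dt:\Phi(\vec f)=\vec g\}=\tfrac12\int_0^1\|\dot{\vec g}\|^2dt=I(\vec g)$ for $\vec g\in H\cap\Phi(C([0;1],\mbR^d))$ with $\vec g(0)=\vec u$, and $=+\infty$ otherwise (if $\vec g\notin\Phi(\cdot)$ there is no preimage; if $\vec g(0)\ne\vec u$ or $\vec g\notin H$ every preimage has infinite energy). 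Goodness of $I$ follows since its sublevel sets are closed subsets of the compact sublevel sets of the Schilder rate function intersected with the closed set $\{\vec g(0)=\vec u\}$, using that $\{\vec g: \vec g=\Phi(\vec g)\}$ is closed.

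\emph{The main obstacle} is the discontinuity of $\Phi$ at boundary-grazing paths: making the extended contraction principle (Theorem~\ref{thm:2.2}, or a direct $\liminf/\limsup$ argument) go through requires carefully identifying a set of continuity points of $\Phi$ large enough to carry the lower bound. This is where the only genuine work lies; once the approximation of finite-energy stopped paths by finite-energy paths crossing transversally into $B$ is set up, both the upper bound (via closedness and the continuity-point structure) and the lower bound (via the explicit recovery sequence $\vec f$) are routine.
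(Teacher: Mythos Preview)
There is a genuine gap. Your lower-bound recovery hinges on extending a stopped finite-energy path $\vec g$ (with $\tau(\vec g)<1$) by a short segment ``diving into the interior of $B$'' so as to land at a continuity point of $\Phi$. But $B$ is an \emph{arbitrary} closed subset of $\mbR^d$ and may well have empty interior; then no such extension exists and your class of type-1 continuity points is empty. The paper's Example~\ref{expl:4.1} (a fat Sierpi\'nski carpet) is built to illustrate exactly this obstruction, and it simultaneously refutes your claim that $\{\vec g:\Phi(\vec g)=\vec g\}$ is closed: the functions $\vec f_n$ constructed there avoid $B$ (hence are fixed points of $\Phi$) yet converge to an $\vec f$ with $\Phi(\vec f)\neq\vec f$. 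So both your recovery sequence for the lower bound and your goodness argument break down for general closed $B$.

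The paper does not try to locate continuity points of $\Phi$ at all. Starting from the same identification $\vec y\,^\ve=\Phi(\vec w\,^\ve)$ that you use, it proves directly the two identities
\[
\inf_G I=\inf_{\Phi^{-1}(G)^\circ} I_0,\qquad \inf_F I=\inf_{\overline{\Phi^{-1}(F)}} I_0
\]
for open $G$ and closed $F$, and then reads off the LDP from Schilder's theorem for $\vec w\,^\ve$. The crucial approximation is not ``enter the interior of $B$'' but the \emph{time-truncation} $\vec h_\delta(t)=\vec f\bigl(t\wedge(\tau(\vec f)-\delta)\bigr)$: for any $\vec f=\Phi(\vec f)$ this $\vec h_\delta$ avoids $B$ entirely (since $B$ is closed and $\vec f([0,\tau(\vec f)-\delta])\cap B=\varnothing$), lies in an open ball on which $\Phi$ is the identity, satisfies $I_0(\vec h_\delta)\le I_0(\vec f)=I(\vec f)$, and converges to $\vec f$ as $\delta\to0$. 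This needs nothing about the interior of $B$. Replacing your ``dive in'' step by this truncation would repair the lower bound within your framework; the upper bound still requires an explicit comparison of $\overline{\Phi^{-1}(F)}$ with $F$ (using lower semicontinuity of $\tau$), which your sketch does not supply.
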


Before proving the theorem let us note that it can not be obtained
directly from the LDP for Wiener process by contraction principle or
its modification [9], since the map $\Phi$  is not continuous. Moreover,
in some cases the set of discontinuities for $\Phi$  has a
positive Wiener measure. The following example shows such possibility.
\begin{expl}
\label{expl:4.1}
Let $d=2$  and $B$  be a Sierpinski carpet of the positive Lebesgue measure,
which is built by the usual procedure [16],
$$
B=\cap^\infty_{n=1}B_n.
$$
Here for every $n\geq1$ \ $B_n$  is a finite union of disjoint
closed squares, $B_1=[0;1]^2,$ $B_{n+1}\subset B^\circ_n, n\geq1.$
The squares in $B_n$  have equal sides $d_n$ and $d_n\to0,
n\to\infty.$ By the Kakutany criterium the process $\vec{w}$
visits $B$  with a positive probability, i.e.
\begin{equation}
\label{eq:4.1}
P\{\exists \ t\in[0;1): \ \vec{w}(t)\in B\}>0.
\end{equation}
Now consider the subset $A\subset C([0; 1], \mbR^d)$
consisting of functions with the properties
$$
\tau(\vec{f})<1, \ \Phi(\vec{f})\ne\vec{f}.
$$
It can be proved, that $A$ is a Borel set. It follows from
\eqref{eq:4.1}, that $A$ has a positive Wiener measure. Now check,
that every point of $A$ is a point of discontinuity for $\Phi.$  Let
$\vec{f}\in A.$  For every $n\geq1$  define $\Delta_n=\{t\in[0;1]:
\vec{f}(t)\in B^\circ_n\}.$ Then $\Delta_n$  is a union of
disjoint open intervals (possibly with one interval of the type
$(\alpha; 1]$). Consider one interval $(\alpha;\beta)$  from
$\Delta_n.$ Then $\vec{f}([\alpha;\beta])$  is a subset of one of
the squares which form $B_n.$  Denote this square by $J.$   Then
$\vec{f}(\alpha), \vec{f}(\beta)\in\partial J.$  Define the function
$\vec{f}_n$  on $[\alpha;\beta]$  in such a way that it is continuous
and $\vec{f}_n([\alpha;\beta])\subset\partial J,$
$\vec{f}_n(\alpha)=\vec{f}(\alpha),$
$\vec{f}_n(\beta)=\vec{f}(\beta).$  If we proceed in the same way on
every interval from $\Delta_n$   and define for $t\notin\Delta_n$
$\vec{f}_n(t)=\vec{f}(t),$  then we will get $\vec{f}_n\in C([0;1],
\mbR^2).$  This function has the following properties:

1) $\max\limits_{t\in[0;1]}\|\vec{f}_n(t)-\vec{f}(t)\|\leq2d_n\to0,
n\to\infty,$

2) $\tau(\vec{f}_n)=1, \ n\geq1.$

Consequently,
$$
\Phi(\vec{f}_n)=\vec{f}_n, \ n\geq1,
$$
and
$$
\Phi(\vec{f}_n)\to\vec{f}, \ n\to\infty.
$$
But $\vec{f}\ne\Phi(\vec{f}).$  Hence $\vec{f}$ is a point of
discontinuity for $\Phi.$  Note that the crucial point in this
example is the possibility to approximate uniformly arbitrary Wiener
trajectory by continuous functions,that do not take values in the set $B.$
\end{expl}

\begin{proof}[Proof of Theorem \ref{thm:4.1}]
Let $G$ be an open set in $C([0;1],\mbR^d).$  Denote by $I_0$ the rate
function for the family
$$
\vec{w}^\ve(t)=\vec{w}(\ve t), \ t\in[0;1], \ \ve\in(0;1],
$$
i.e.
$$
I_0(\vec{g})=
\begin{cases}
\frac{1}{2}\int^1_0\|\dot{\vec{g}}(t)\|^2dt,& \ \vec{g}(0)=\vec{u},
\ \vec{g}\in H,\\
+\infty,&  \   \vec{g}(0)\ne\vec{u}  \ \mbox{or} \ \vec{g}\notin H.
\end{cases}
$$
Note that
$$
\vec{y}\,^\ve=\Phi(\vec{w}_\ve), \ \ve\in(0; 1].
$$
We will prove, that
\begin{equation}
\label{eq:4.2}
\inf_GI=\begin{cases}
\inf_{\Phi^{-1}(G)^\circ}I_0, & \  \ \Phi^{-1}(G)^\circ\ne\O,\\
+\infty, &\  \ \Phi^{-1}(G)^\circ=\O.
\end{cases}
\end{equation}
Here, as usual, $A^\circ$   denote the interior  of the set $A.$
Consider first the  case when $\Phi^{-1}(G)^\circ=\O.$   Let
$\vec{f}\in G.$ Note that if $\tau(\vec{f})=1,$  then there exists
$\vec{g}\in G$  close enough to $\vec{f}$  such that
$\vec{g}([0;1])\cap B=\O.$ The function $\vec{g}$  can be defined as
$$
\vec{g}(t)=\vec{f}(t\wedge1-\delta)
$$
for small enough $\delta.$  Then there exists $\sigma>0$  such, that the
open ball $B(\vec{g}, \sigma)$   with the center $\vec{g}$ and radius
$\sigma$ is subset of $G$   and has the property
$$
\forall \ \vec{h}\in B(\vec{g}, \sigma): \ \vec{h}([0; 1])\cap B=\O.
$$
Hence
$$
\forall \ \vec{h}\in B(\vec{g}, \sigma): \ \Phi(\vec{h})=\vec{h}.
$$
Consequently, $B(\vec{g},\sigma)\subset\Phi^{-1}(G)$  which
contradicts our supposition. It follows from the previous
considerations, that now
$$
\forall \ \vec{f}\in G: \tau(\vec{f})<1.
$$
Consider $\vec{f}\in G$    such, that
$$
\Phi^{-1}(\vec{f})=\O.
$$
For such $\vec{f}$ by definition
$I(\vec{f})=+\infty.$   Now let $\Phi^{-1}(\vec{f})\ne\O.$  Then
$\vec{f}=\Phi(\vec{f}).$   Note that the open set $G$  contains with the
function $\vec{f}$ every function of the kind
\begin{equation}
\label{eq:4.3}
\vec{h}(t)=\vec{f}(t\wedge\tau(\vec{f})-\delta)
\end{equation}
for small enough $\delta.$  But for such $\vec{h}$ $\tau(\vec{h})=1$
which again contradicts our supposition $\Phi^{-1}(G)^\circ=\O.$

It remains to consider the case $\Phi^{-1}(G)^\circ\ne\O.$ Let
$\vec{f}$  belongs to $G.$   If $\tau(f)=1$   and $I(f)<+\infty,$
then, similarly to the previous considerations, there exists a
sequence $\{\vec{h}_n; n\geq1\}$   from $\Phi^{-1}(G)^\circ$ such,
that $\vec{h}_n\to\vec{f},$ $I(\vec{h}_n)\to I(\vec{f}),$
$n\to\infty,$   and
$$
\forall \ n\geq1: \ \vec{h}_n([0;1])\cap B=\O.
$$
Since for such functions
$$
I(\vec{h}_n)=I_0(\vec{h}_n),
$$
$$
\Phi(\vec{h}_n)=\vec{h}_n,
$$
then
\begin{equation}
\label{eq:4.4} I(\vec{f})\geq\inf_{\Phi^{-1}(G)^\circ}I_0.
\end{equation}
Now take $\vec{f}\in G$   such, that $\tau(\vec{f})<1$  and
$\Phi^{-1}(\vec{f})=\O,$ $I(\vec{f})<+\infty.$ Using the
approximation like \eqref{eq:4.3} we again can get inequality
\eqref{eq:4.4}. This completes the proof of relation
\eqref{eq:4.2}.

Now for open set $G$
$$
\mathop{\varliminf}\limits_{\ve\to0}\ve\ln P\{y_\ve\in G\}=
\mathop{\varliminf}\limits_{\ve\to0}\ve\ln P\{\Phi(w_\ve)\in G\}=
$$
$$
= \mathop{\varliminf}\limits_{\ve\to0}\ve\ln
P\{w_\ve\in\Phi^{-1}(G)\}\geq
\mathop{\varliminf}\limits_{\ve\to0}\ve\ln
P\{w_\ve\in\Phi^{-1}(G)^0\}\geq
$$
$$
\geq\inf_{\Phi^{-1}(G)^\circ} I_0=\inf_GI.
$$
Here we use the LDP for Wiener process.

Consider the closed set $F\subset C([0;1], \mbR^d).$   Let us prove for
$F$ an analog of \eqref{eq:4.2}.   Namely,

\begin{equation}
\label{eq3.7}
\inf_FI=\inf_{\ov{\Phi^{-1}(F)}} I_0,
\end{equation}
where $\bar{A}$  denote the closure of $A.$    To check \eqref{eq3.7}
take a function $\vec{f}\in\ov{\Phi^{-1}(F)}.$ Suppose, that
$I_0(\vec{f})<+\infty.$

If $\tau(\vec{f})=1,$  then $\Phi(\vec{f})=\vec{f}.$   From other side
$$
\vec{f}=\lim_{n\to\infty}f_n,
$$
where
$$
\vec{f}_n\in\Phi^{-1}(F), \ n\geq1.
$$
Consequently,
$$
\tau(\vec{f}_n)\to1, \ n\to\infty,
$$
$$
\Phi(\vec{f}_n)\to\vec{f}, \ n\to\infty.
$$
Hence $\vec{f}\in\Phi^{-1}(F)$  and by definition
$I_0(\vec{f})=I(\Phi(f)).$

Now, let $\tau(\vec{f})<1.$ Then condition $I(\vec{f})<+\infty$  implies
that $\Phi(\vec{f})=\vec{f}.$  Consider a sequence
$\{\vec{f}_n; n\geq1\}$  from $\Phi^{-1}(F)$   such, that
$$
\vec{f}_n\to\vec{f}, \ n\to\infty.
$$
Then the sequence $\{\Phi(\vec{f}_n); n\geq1\}$   contains
subsequence converging to a certain element $\vec{g}$ of $F.$   It can
be easily verified, that $\vec{f}=\vec{g}.$  Hence $\vec{f}\in F$   and
$\vec{f}\in\Phi^{-1}(F).$   This proves \eqref{eq3.7}. Now
$$
\mathop{\varlimsup}\limits_{\ve\to0+}\ve\ln P\{\vec{y}\,^\ve\in F\}=
\mathop{\varlimsup}\limits_{\ve\to0+}\ve\ln P\{\vec{w}^\ve
\in\Phi^{-1}(F)\}\leq
$$
$$
\leq \mathop{\varlimsup}\limits_{\ve\to0+}\ve\ln P\{\vec{w}^\ve
\in\ov{\Phi^{-1}(F)}\}\leq -\inf_{\ov{\Phi^{-1}(F)}}I_0=-\inf_FI.
$$
The theorem is proved.
\end{proof}

\section
{ LDP for finite-dimensional distributions of Arratia's flow}  In this
section we will consider the LDP for process $\vec{x}\,^\ve=\{x(u_1,
\ve t), \ldots ,$\newline$x(u_n, \ve t), $ $ t\in[0; 1]\}, \ve\in(0;
1]$ where $x$ is Arratia's flow, $u_1<\ldots <u_n$  are fixed points.
Denote by $C_{\vec{u}}([0; 1], \mbR^n)$   the subset of $C([0; 1],
\mbR^n)$ consisting of functions with the property
$f_i(0)=u_i, i=1,\ldots, n. $   Consider the moments of sticking
$\tau_1\leq\ldots\leq \tau_{n-1}$   (possibly from a certain
number they are equal to 1). Using these moments one can define the
map $\Phi$  on $C_{\vec{u}}([0; 1], \mbR^n)$ similarly
as it was done in the
previous section. Namely, after the moment of meeting of some
coordinates put all of them to be equal to the coordinate with the
smallest number. Then $\vec{x}=\Phi(\vec{w}),$   where $\vec{w}$  is
a standard Wiener process starting from $(u_1,\ldots, u_n).$
Exactly as in the previous section one can prove the following
theorem.
\begin{thm}
\label{thm5.1v} $\{\vec{x}\,^\ve\}$  satisfies the  LDP  with rate
function $I.$ Here $I(\vec{f})$  is equal to $+\infty$ if
$\Phi(\vec{f})\ne\vec{f}$ or if
 $\vec{f}$   has not a square integrable
derivative. In the opposite case
$$
I(\vec{f})=\frac{1}{2}\sum^n_{k=0}\int^{\tau(u_k)}_0f'_k(s)^2ds.
$$
\end{thm}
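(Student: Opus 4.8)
The plan is to mimic the proof of Theorem \ref{thm:4.1} essentially verbatim, replacing the single stopping time $\tau$ by the finite increasing family $\tau_1\le\dots\le\tau_{n-1}$ of sticking moments and keeping track of the fact that the map $\Phi$ now collapses coordinates rather than freezing a single trajectory. First I would set up the comparison process: let $\vec{w}$ be a standard Wiener process in $\mbR^n$ starting from $(u_1,\dots,u_n)$ and let $\vec{w}^\ve(t)=\vec{w}(\ve t)$, whose family satisfies the LDP in $C_{\vec u}([0;1],\mbR^n)$ with Schilder rate function $I_0(\vec g)=\frac12\int_0^1\|\dot{\vec g}(s)\|^2ds$ (and $+\infty$ otherwise). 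The key identity, exactly as before, is $\vec{x}\,^\ve=\Phi(\vec{w}^\ve)$, which follows from property 3) in Definition \ref{defn:3.1}: after coordinates $i$ and $j$ meet they have identical increments, so coalescing them by hand does not change the law.

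The heart of the argument, as in Section \ref{section4}, is the two topological lemmas
$$
\inf_G I=\begin{cases}\inf_{\Phi^{-1}(G)^\circ}I_0,&\Phi^{-1}(G)^\circ\ne\O,\\+\infty,&\Phi^{-1}(G)^\circ=\O,\end{cases}
\qquad
\inf_F I=\inf_{\ov{\Phi^{-1}(F)}}I_0
$$
for open $G$ and closed $F$. These are proved by the same perturbation device used for Theorem \ref{thm:4.1}: given $\vec f\in G$ with $I(\vec f)<\infty$ (so $\Phi(\vec f)=\vec f$ and $\vec f$ has square-integrable derivative), one approximates $\vec f$ uniformly by functions $\vec h$ that are genuine fixed points of $\Phi$, obtained by slightly shortening the trajectory before each sticking time — concretely, reparametrising so that $\vec h(t)=\vec f(t\wedge\tau_k(\vec f)-\delta)$ on the relevant blocks — which keeps $I_0(\vec h)$ close to $I(\vec f)$ and places $\vec h$ in $\Phi^{-1}(G)^\circ$; conversely any $\vec f\in G$ not of fixed-point form has $I(\vec f)=+\infty$ by definition. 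For the closed-set statement one uses that $\Phi$ is continuous on the set of functions with $\tau_k(\vec f)=1$ for all $k$ and sequential arguments on the ordered stopping times to identify limits. Once these two identities are in hand, the LDP bounds follow mechanically:
$$
\varliminf_{\ve\to0}\ve\ln P\{\vec{x}\,^\ve\in G\}=\varliminf_{\ve\to0}\ve\ln P\{\vec{w}^\ve\in\Phi^{-1}(G)\}\ge\varliminf_{\ve\to0}\ve\ln P\{\vec{w}^\ve\in\Phi^{-1}(G)^\circ\}\ge-\inf_{\Phi^{-1}(G)^\circ}I_0=-\inf_G I,
$$
and symmetrically with $\varlimsup$, closed $F$, closures and $\le$, using Schilder's theorem for $\vec{w}^\ve$.

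Finally I would compute the rate function explicitly to get the stated formula. On a function $\vec f$ with $\Phi(\vec f)=\vec f$, the coordinate $f_k$ is free only on $[0,\tau(u_k))$ — where $\tau(u_k)$ is the meeting time of the $k$-th particle with the $(k-1)$-th, and $\tau(u_0)=1$ — and after that it coincides with an earlier coordinate, contributing nothing new to $\|\dot{\vec f}\|^2$ once one accounts for each degree of freedom exactly once. Organising the bookkeeping along the groups of coalesced indices gives
$$
\frac12\int_0^1\|\dot{\vec f}(s)\|^2ds=\frac12\sum_{k=0}^{n}\int_0^{\tau(u_k)}f_k'(s)^2\,ds,
$$
which is the claimed value of $I(\vec f)$. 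The main obstacle I anticipate is the careful treatment of the discontinuity set of $\Phi$ and of the nested stopping times when proving the closed-set identity: one must check that limits of $\Phi(\vec f_n)$ are again reached by $\Phi$ and that the orderings $\tau_1\le\dots\le\tau_{n-1}$ pass to the limit, which is where the structure of Arratia's flow (and the analogue of Example \ref{expl:4.1}) makes the naive contraction principle fail and forces the open-interior/closure formulation.
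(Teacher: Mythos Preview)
Your overall strategy matches the paper's (which simply says the proof is ``exactly as in the previous section''), but there is a genuine error in your final computation, and it propagates back into the open-set lemma.

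The displayed identity
\[
\frac12\int_0^1\|\dot{\vec f}(s)\|^2\,ds=\frac12\sum_{k=0}^{n}\int_0^{\tau(u_k)}f_k'(s)^2\,ds
\]
is false. If $\Phi(\vec f)=\vec f$ then on $[\tau(u_k),1]$ one has $f_k=f_{k-1}$, hence $\dot f_k^2=\dot f_{k-1}^2$, and the left-hand side counts that contribution \emph{twice} (once for index $k$, once for index $k-1$). Unlike the stopped process of Section~\ref{section4}, where $\Phi(\vec f)=\vec f$ forces $\dot{\vec f}=0$ after $\tau$, here the coalesced coordinates keep moving. The correct relation is
\[
I(\vec f)=\inf\{I_0(\vec g):\Phi(\vec g)=\vec f\}=\frac12\sum_{k}\int_0^{\tau(u_k)}f_k'(s)^2\,ds,
\]
the infimum being attained by the preimage $\vec g$ with $g_k=f_k$ on $[0,\tau(u_k)]$ and $g_k$ constant thereafter. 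In general $I(\vec f)<I_0(\vec f)$.

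This same confusion breaks your perturbation for the open-set identity. Your $\vec h$ with $h_k(t)=f_k(t\wedge(\tau_k-\delta))$ does have $I_0(\vec h)\to I(\vec f)$ and does have strictly separated coordinates, so $\Phi(\vec h)=\vec h$; but $\vec h$ does \emph{not} converge to $\vec f$ (its $k$-th coordinate freezes near $\tau_k$ while $f_k$ continues along $f_{k-1}$), so you cannot conclude $\vec h\in G$, and hence not $\vec h\in\Phi^{-1}(G)^\circ$. What you actually need is a preimage $\vec g$ with $I_0(\vec g)\approx I(\vec f)$ and such that $\Phi$ maps a whole neighbourhood of $\vec g$ into $G$; this requires arranging that nearby functions still coalesce close to the $\tau(u_k)$ (e.g.\ by making the meetings transversal), so that $\Phi(\vec g')$ --- not $\vec g'$ itself --- stays close to $\vec f$. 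The closed-set half has the analogous issue. Once you work at the preimage level rather than identifying $I$ with $I_0\!\restriction_{\Phi(C)}$, the argument of Theorem~\ref{thm:4.1} does go through.
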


Here, as in section 3,
$$
\begin{aligned}
&
\tau(u_0)=1,\\
& \tau(u_k)=\inf\{t: f_k(t)=f_{k-1}(t)\}\wedge1, \ k=1, \ldots, n.
\end{aligned}
$$

\section{  LDP for Arratia's flow on finite interval}  Consider Arratia's
 flow $\{x(u, t); u\in[0; 1], t\in[0; 1]\}.$
Following [14]  we suppose, that $x$  is already modified to be a
c\`adl\`ag process with respect to the variable $u$  with values in
$C([0; 1]).$  Denote by $\lambda$  the Lebesgue measure on $[0; 1]$
and for every $t\in[0; 1]$ define the random measure $\mu_t$  as the
image
$$
\mu_t=\lambda\circ x(\cdot, t)^{-1}.
$$
As it was proved in [2, 14], $\mu_t$   for every $t>0$  is a
random measure concentrated in a finite number of points. Note that $x$   can be fully recovered from $\{\mu_t; t\in(0; 1]\}.$  For
probability measures on $\mbR$  we will use the L\'evy--Prokhorov
distance $\sigma$ [17], which metrizes weak convergence.
For $\ve\in(0; 1]$  define
$$
\mu^\ve_t=\mu_{\ve t}, \ t\in[0; 1].
$$
We will establish the LDP for the processes $\mu^\ve.$ To do this
consider the sequence of the $(2^n+1)$-point motions from Arratia's flow.
For every $n\geq1$  define $x_{n\ve}$  as the family
$\left\{x\left(\frac{k}{2^n}, \ve t\right); k=0,\ldots, 2^n, t\in[0;
1]\right\}.$   Recall $x_{n\ve}$  satisfies the LDP in $C([0; 1],
\mbR^{2^n+1})$   accordingly to the previous section. Let us
consider the sequence $\{x_{n\ve}; n\geq1\}$  as an element of the
product $ \prod^\infty_{n=1}C([0; 1], $\newline $\mbR^{2^n+1}) $ which is
equipped with the product topology. Note that for every $k\leq n$ \
$x_{k\ve}$  is a continuous function of $x_{n\ve}.$ Consequently, by
the contraction principle, the vector $(x_{1\ve}, \ldots, x_{n\ve})$
satisfies the LDP in $ \prod^n_{k=1}C([0; 1], \mbR^{2^k+1}) $ with
rate function $I_n$  (the same as for $x_{n\ve}$). Applying Dawson
and G\"{a}rtner theorem about random sequences [18]  one can get
that $\{x_{n\ve}; n\geq1\}$   satisfies the  LDP in $
\prod^\infty_{n=1}C([0; 1], \mbR^{2^n+1}) $ with rate function
\begin{equation}
\label{eq6.1}
I(\{f_n; n\geq1\})=\sup_{n\geq1}I_n(f_n).
\end{equation}
The last expression can be rewritten as follows. Consider a function
$f$ which is defined on the set
$\mbQ_2=\left\{\frac{k}{2^n}; k=0,\ldots, 2^n, n\geq1\right\},$
takes values in $C([0; 1])$ and satisfies the condition
$f(r,0)=r, r\in\mbQ_2.$
 Such functions are in one to one
correspondence with the sequences $\{f_n; n\geq1\}$   from
\eqref{eq6.1}. If $I(f)<+\infty,$  then for arbitrary $r_1\leq r_2$
from $\mbQ_2$ and $t\in[0; 1]$
\begin{equation}
\label{eq6.2}
f(r_1, t)\leq f(r_2, t).
\end{equation}
For $f$ satisfing \eqref{eq6.2}  and $r_1<r_2$  from $\mbQ_2$
define
$$
\tau(r_1, r_2)=\inf\{t: f(r_1, t)=f(r_2, t)\}\wedge 1.
$$
Then for every $n\geq1$
$$
I_n(f)=\frac{1}{2}\sum^{2^n}_{k=0}\int_0^
{\tau\left(\frac{k-1}{2^n}, \frac{k}{2^n}\right)} {\dot
f}\left(\frac{k}{2^n}, s\right)^2ds,
$$
where
$$
\tau\left(\frac{-1}{2^n}, 0\right)=1.
$$
Note that $\{I_n(f); n\geq1\}$  is nondecreasing and
$$
I(f)=\lim_{n\to\infty}I_n(f).
$$
Define the space of
trajectories for Arratia's flow. Denote by $\cM$ the set of real-valued
functions defined on $[0; 1]^2,$ which have the following properties:

1) for every $u\in[0; 1]$  $y(u, \cdot)\in C([0; 1]),$

2) for all $u_1\leq u_2, t\in[0; 1]$
$$
y(u_1, t)\leq y(u_2, t),
$$

3) for every $t\in[0; 1]$  \ $y(\cdot, t)$  is a c\`adl\`ag function,

4) for all $u_1, u_2\in[0; 1]$
$$
y(u_1, t)=y(u_2, t), \ t\geq\tau_{u_1u_2},
$$
$$
\tau_{u_1u_2}=\inf\{s: y(u_1, s)=y(u_2, s)\},
$$

5) for every $u\in[0; 1]$
$$
y(u,0)=u.
$$

An arbitrary element of $\cM$  can be treated as a continuum forest. We will
endow $\cM$  with the distance
$$
\rho(y_1, y_2)=\max_{t\in[0; 1]}\sigma(y_1(\cdot, t), y_2(\cdot,
t)),
$$
where $\sigma$ is the L\'evy--Prokhorov distance between c\`adl\`ag
functions. Note that the convergence of $y$ in the distance
$\sigma$  for a fixed $t$ is equivalent to the weak convergence of
$\mu_t.$   Now define the subset $\cR$ of $C([0; 1])^\infty$   as
follows. $\cR$ consists of real-valued functions defined on
$\mbQ_2\times[0; 1],$ which have the properties:

1) for every $r\in\mbQ_2$
$$
y(r, \cdot)\in C([0; 1]),
$$

2) for all $r_1\leq r_2, t\in[0; 1]$
$$
y(r_1, t)\leq y(r_2, t),
$$

3) for every $r\in\mbQ_2$
$$
y(r,0)=r.
$$

Note that $\cR$  as a subset of $C([0; 1])^\infty$ is closed in the
distance of pointwise  convergence on $\mbQ_2$ and  uniform convergence on $[0; 1].$

Consider a map $i: \cR\to\cM$ which is defined as follows\newline
$\forall \ u,t\in[0; 1], \ y\in\cR:$
$$
i(y)=\wt{y}, \ \wt{y}(u, t)=\inf_{\begin{subarray}{l}
r\in\mbQ_2\\
r>u
\end{subarray}}
y(r,t).
$$
\begin{lem}
\label{lem6.1.}
$i$ is continuous mapping.
\end{lem}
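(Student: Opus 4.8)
The plan is to show that the map $i\colon\cR\to\cM$ is continuous at an arbitrary point $y\in\cR$ by estimating the L\'evy--Prokhorov distance $\sigma(\wt y_1(\cdot,t),\wt y_2(\cdot,t))$ uniformly in $t\in[0;1]$ when $y_1,y_2\in\cR$ are close in the topology of $\cR$ (pointwise convergence on $\mbQ_2$ together with uniform convergence on $[0;1]$ in $t$). First I would unpack the definition of $i$: for $y\in\cR$, the function $\wt y(u,t)=\inf_{r\in\mbQ_2,\,r>u}y(r,t)$ is, for each fixed $t$, the right-continuous nondecreasing envelope of the nondecreasing function $r\mapsto y(r,t)$ on $\mbQ_2$; the image measure $\lambda\circ\wt y(\cdot,t)^{-1}$ depends on $y(\cdot,t)|_{\mbQ_2}$ only through the distribution function $u\mapsto\wt y(u,t)$. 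So it suffices to control how this distribution function moves when $y$ moves in $\cR$.

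Next I would recall the standard characterization of the L\'evy--Prokhorov distance for measures on $\mbR$: $\sigma(\nu_1,\nu_2)\le\eta$ whenever the corresponding distribution functions satisfy $F_1(x-\eta)-\eta\le F_2(x)\le F_1(x+\eta)+\eta$ for all $x$, equivalently whenever the (generalized) quantile functions differ by at most $\eta$ in the uniform norm. Here the relevant quantile functions are precisely $u\mapsto\wt y_1(u,t)$ and $u\mapsto\wt y_2(u,t)$ on $[0;1]$. Hence the key claim is: if $\sup_{r\in\mbQ_2}\sup_{t\in[0;1]}|y_1(r,t)-y_2(r,t)|$ is small, then $\sup_{u\in[0;1]}\sup_{t\in[0;1]}|\wt y_1(u,t)-\wt y_2(u,t)|$ is small. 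This follows because the infimum over $r>u$ is $1$-Lipschitz with respect to the sup-norm in the argument being infimized: $|\inf_{r>u}y_1(r,t)-\inf_{r>u}y_2(r,t)|\le\sup_{r\in\mbQ_2}|y_1(r,t)-y_2(r,t)|$. Therefore $\rho(i(y_1),i(y_2))=\max_t\sigma(\wt y_1(\cdot,t),\wt y_2(\cdot,t))\le\sup_{r,t}|y_1(r,t)-y_2(r,t)|$, which gives continuity (indeed $1$-Lipschitz continuity for this choice of metrics).

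I would then check that $i(y)$ actually lands in $\cM$, i.e. verify properties 1)--5) of $\cM$ for $\wt y$: monotonicity in $u$ and the value $\wt y(u,0)=u$ are immediate from monotonicity of $y$ on $\mbQ_2$ and density of $\mbQ_2$ in $[0;1]$ together with $y(r,0)=r$; right-continuity in $u$ holds by construction of the infimum envelope; continuity in $t$ for fixed $u$ needs a short argument using that $y(r,\cdot)\in C([0;1])$ together with a squeeze between $y(r',t)$ and $y(r'',t)$ for rationals $r'<u<r''$ bracketing $u$, and uniform continuity can be arranged since the relevant sup over a bounded family of continuous coalescing trajectories is controlled; the coalescence property 4) for $\wt y$ is inherited from the analogous property built into $\cR$ via the monotone structure and the definition of $\tau_{u_1u_2}$. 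The one point requiring genuine care, and the step I expect to be the main obstacle, is the joint (in $u$ and $t$) continuity needed to conclude $\wt y(u,\cdot)\in C([0;1])$ and that $t\mapsto\sigma(\wt y_1(\cdot,t),\wt y_2(\cdot,t))$ is well-behaved; here one exploits that for a \emph{fixed} $y\in\cR$ the uniform bound over $t$ and the monotonicity in $u$ give equicontinuity, so the $\max_{t\in[0;1]}$ in the definition of $\rho$ is attained and the Lipschitz estimate above is legitimate. Everything else is a routine consequence of the Lipschitz inequality $\rho(i(y_1),i(y_2))\le\sup_{r\in\mbQ_2,\,t\in[0;1]}|y_1(r,t)-y_2(r,t)|$.
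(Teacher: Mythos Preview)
There is a genuine gap in your argument concerning the topology on $\cR$. The space $\cR$ is a subset of $C([0;1])^\infty$ (indexed by the countable set $\mbQ_2$) equipped with the product topology: convergence $y_n\to y$ means that for each fixed $r\in\mbQ_2$ one has $\max_{t\in[0;1]}|y_n(r,t)-y(r,t)|\to0$. This does \emph{not} imply that $\sup_{r\in\mbQ_2}\max_{t\in[0;1]}|y_n(r,t)-y(r,t)|\to0$. Your Lipschitz inequality
\[
\rho(i(y_1),i(y_2))\le\sup_{r\in\mbQ_2,\,t\in[0;1]}|y_1(r,t)-y_2(r,t)|
\]
is correct, but the right-hand side is not controlled by any neighbourhood in the product topology, so the inequality does not yield continuity of $i$ on $\cR$.

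The paper's proof repairs exactly this point by exploiting the monotonicity built into $\cR$. Given $\ve>0$, one chooses a \emph{finite} $\ve/2$-mesh $0=r_0<\cdots<r_m=1$ in $\mbQ_2$; in the product topology one can then make $\max_t|y_n(r_k,t)-y(r_k,t)|<\ve/2$ simultaneously for these finitely many $r_k$. Monotonicity in $r$ lets one squeeze: for any $u$ and any $r_k\in(u,u+\ve)$,
\[
\wt y_n(u,t)\le y_n(r_k,t)\le y(r_k,t)+\tfrac{\ve}{2}\le \wt y(u+\ve,t)+\tfrac{\ve}{2},
\]
and symmetrically from below using $r_k\in(u-\ve,u)$. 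This is precisely the L\'evy inequality for the nondecreasing functions $\wt y_n(\cdot,t)$ and $\wt y(\cdot,t)$, uniformly in $t$, giving $\rho(i(y_n),i(y))\le\ve$. Your quantile-function viewpoint is the right one, but the passage from pointwise-in-$r$ control to the L\'evy bound must go through a finite partition plus monotonicity, not through a global sup over $\mbQ_2$.
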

\begin{proof}
Suppose, that $y_n\to y, n\to\infty$ in $\cR.$   For a given positive $\ve$
consider a partition $0=r_0<\ldots<r_m=1$    consisting of the
points from $\mbQ_2$ and satisfying the property
$$
\max_{k=0,\ldots, m-1}(r_{k+1}-r_k)<\frac{\ve}{2}.
$$
Take $n$ such, that\newline
$\forall \ k=0, \ldots, m:$
$$
\max_{[0; 1]}|y_n(r_k, t)-y(r_k, t)|<\frac{\ve}{2}.
$$
Consider an arbitrary $u\in[0; 1).$  Then there exists
$r_k\in(u, u+\ve).$  For such $r_k$  and arbitrary $t\in[0; 1]$
$$
\wt{y}_n(u, t)\leq y_n(r_k, t)\leq y(r_k, t)+\frac{\ve}{2}\leq
$$
$$
\leq\wt{y}(u+\ve, t)+\frac{\ve}{2},
$$
and
$$
\wt{y}(u, t)\leq y(r_k, t)\leq y_n(r_k, t)+\frac{\ve}{2}\leq
\wt{y}_n(u+\ve, t)+\frac{\ve}{2}.
$$
Now take $u\in[\ve; 1].$ There  exists $r_k\in(u-\ve, u).$  For such $r_k$
and arbitrary $t\in[0; 1]$
$$
\wt{y}_n(u, t)\geq y_n(r_k, t)\geq y(r_k, t)-\frac{\ve}{2}\geq
\wt{y}(u-\ve, t)-\frac{\ve}{2},
$$
and
$$
\wt{y}(u, t)\geq y(r_k, t)\geq y_n(r_k, t)-\frac{\ve}{2}\geq
\wt{y}_n(u-\ve, t)-\frac{\ve}{2}.
$$
The lemma is proved.
\end{proof}

As a consequence of the previous lemma and the contraction principle we
obtain the  LDP for Arratia's flow.
\begin{thm}
\label{thm6.1} Let $\{x^\ve(u,t)=x(u, \ve t), u\in(0; 1), t\in[0;
1]\},$ $\ve\in(0; 1)$  are the random fields obtained from Arratia's
flow by the time changing. Then the family $\{x^\ve\}_{\ve>0}$
satisfies the  LDP in $\cM$ with the rate function
\begin{equation}
\label{eq6.3}
I(x)=\inf_{i(h)=x}I_0(h),
\end{equation}
where $I_0(h)$  is given by \eqref{eq6.1}.
\end{thm}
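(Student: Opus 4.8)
The plan is to exhibit $\{x^\ve\}$ as the image, under the continuous map $i$ of Lemma~\ref{lem6.1.}, of a family that already satisfies the LDP in $\cR$, and then to conclude by the contraction principle; so the argument splits into three steps.

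First I would make precise the statement — implicit in the discussion leading to \eqref{eq6.1} — that the sequence $\{x_{n\ve}\}_{n\geq1}$, regarded via the one-to-one correspondence with functions on $\mbQ_2\times[0;1]$ (that is, as the random element $r\mapsto x(r,\ve\,\cdot)$ of $\cR$), satisfies the LDP in $\cR$ with the rate function $I_0$ appearing in \eqref{eq6.1}. Here I would start from the Dawson--G\"artner LDP for $\{x_{n\ve}\}_{n\geq1}$ in $\prod_{n=1}^\infty C([0;1],\mbR^{2^n+1})$ with good rate function $\{f_n\}\mapsto\sup_n I_n(f_n)$, note that the correspondence identifying a sequence $\{f_n\}$ with a function on $\mbQ_2\times[0;1]$ is a homeomorphism of the closed set of sequences that are consistent across levels, nondecreasing in the spatial index, and satisfy $f_n(0)=(0,2^{-n},\ldots,1)$ onto $\cR$, and observe that by properties~1) and 2) of Arratia's flow the family $\{x_{n\ve}\}_{n\geq1}$ lives in that set almost surely. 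Restricting an LDP to a closed set carrying the full mass preserves the rate function on that set (the upper bound is inherited directly, the lower bound follows from the ambient one), so transporting through the homeomorphism delivers the desired LDP in $\cR$, with $I_0$ being exactly the rate function in \eqref{eq6.1}.

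Second I would check the identity $x^\ve=i\bigl(\{x_{n\ve}\}_{n\geq1}\bigr)$, valid almost surely, which is what allows $i$ to be applied. Working with the c\`adl\`ag-in-$u$ (with values in $C([0;1])$) modification of $x$, which is nondecreasing in $u$ for each $t$, one gets for $u\in(0;1)$ and $t\in[0;1]$ that
$$
x^\ve(u,t)=x(u,\ve t)=\lim_{r\downarrow u}x(r,\ve t)=\inf_{\substack{r\in\mbQ_2\\ r>u}}x(r,\ve t)=i\bigl(\{x_{n\ve}\}_{n\geq1}\bigr)(u,t),
$$
the passage to the dyadic infimum being justified by monotonicity in $u$.

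Finally, since $i\colon\cR\to\cM$ is continuous (Lemma~\ref{lem6.1.}), applying the contraction principle to the LDP for $\{x_{n\ve}\}_{n\geq1}$ in $\cR$ yields the LDP for $\{x^\ve\}=\{i(\{x_{n\ve}\}_{n\geq1})\}$ in $\cM$ with rate function $x\mapsto\inf\{I_0(h):i(h)=x\}$, i.e. \eqref{eq6.3}, completing the proof. I expect the genuine content to sit in the second step — the identification $x^\ve=i(\{x_{n\ve}\}_{n\geq1})$, that is, the fact that the whole coalescing flow is recovered from its restriction to the dyadic lines — since the first step is the routine passage between the product space and $\cR$ (using that $\cR$ is closed, as already noted) and the third step only invokes the cited continuity and contraction principle; it is precisely in that identification that the c\`adl\`ag-in-$u$ structure of Arratia's flow is used in an essential way.
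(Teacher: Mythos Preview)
Your proposal is correct and follows essentially the same route as the paper: the paper states Theorem~\ref{thm6.1} simply ``as a consequence of the previous lemma and the contraction principle,'' relying on the Dawson--G\"artner LDP \eqref{eq6.1} in the product space, the observation that $\cR$ is closed, and the continuity of $i$ from Lemma~\ref{lem6.1.}. You have filled in the two points the paper leaves implicit --- the passage from the product-space LDP to an LDP in the closed subset $\cR$, and the a.s.\ identification $x^\ve=i(\{x_{n\ve}\})$ via right-continuity and monotonicity in $u$ --- but the architecture is identical.
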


\end{document}